\renewenvironment{proof}{{\bfseries Proof.}}{\qed}
\numberwithin{equation}{section} 
\newtheorem{theorem}{Theorem}[section] 
\newtheorem{cor}[theorem]{Corollary} 
\newtheorem{lemma}[theorem]{Lemma} 
\newtheorem{thm}[theorem]{Theorem}
\newtheorem{defn}[theorem]{Definition} 
\newtheorem{rk}[theorem]{Remark} 
\newcommand{\gt}{\tilde{\gamma}}
\newcommand{\n}{\newline}
\newcommand{\tb}{\textbf}
\newcommand{\imp}{\Rightarrow}
\newcommand{\z}{\mathbb{Z}}
\newcommand{\g}{\gamma_{p}}
\newcommand{\io}{\iota }
\newcommand{\secref}[1]{Section~\ref{#1}}
\newcommand{\thmref}[1]{Theorem~\ref{#1}}
\newcommand{\lemref}[1]{Lemma~\ref{#1}}
\newcommand{\corref}[1]{Corollary~\ref{#1}}
\newcommand{\eqnref}[1]{~{\textrm(\ref{#1})}}
\numberwithin{equation}{section}
\subjclass[2020]{Primary 11F06; Secondary 20H05, 20H10, 20E45}
\date{\today}
\begin{document}
	\title[Asymptotic growth of the number of Reciprocal Classes in the Hecke Groups]{ Asymptotic growth of the number of Reciprocal Classes in the Hecke Groups }
	\author{Debattam Das  \and Krishnendu Gongopadhyay}
    
	\address{Indian Institute of Science Education and Research (IISER) Mohali,
		Knowledge City,  Sector 81, S.A.S. Nagar 140306, Punjab, India}
	\email{debattam123@gmail.com}
    \address{Indian Institute of Science Education and Research (IISER) Mohali,
		Knowledge City,  Sector 81, S.A.S. Nagar 140306, Punjab, India}
	\email{ krishnendu@iisermohali.ac.in}
	\keywords{ Hecke groups, reversible elements, reciprocal geodesics, growth rate, central limit theorem1}

	\begin{abstract}
    We estimate the asymptotic growth of reciprocal conjugacy classes in Hecke groups using their free product structure and word lengths of reciprocal elements. Our approach is different from other works in this direction and uses tools from basic probability theory.
	\end{abstract}
	
	\maketitle	
	
	\section{Introduction}\label{1}

The \emph{Hecke group} \( \Gamma_p \) is a Fuchsian group isomorphic to the free product \( \mathbb{Z}_2 * \mathbb{Z}_p \). It is generated by the maps:
\[
\iota: z \mapsto -\dfrac{1}{z}, \qquad \alpha_p: z \mapsto z + \lambda_p, \quad \text{with} \quad \lambda_p = 2 \cos\left(\frac{\pi}{p}\right), \quad p \geq 3.
\]
In particular, \( \Gamma_3 \) corresponds to the modular group. The composition \( \iota \alpha_p : z \mapsto -\dfrac{1}{z + \lambda_p} \) has order \( p \). Let \( \gamma = \iota \alpha_p \), so that \( \Gamma_p \) has the presentation:
\[
\langle \iota, \gamma \mid \iota^2 = \gamma^p = 1 \rangle.
\]
This presentation will be fixed throughout the paper. When \( p = 2r \), the element \( \gamma^r \) is an involution, denoted by \( \tilde{\gamma} = \gamma^r \).

 Recall that a non-trivial element \( g \) in a group \( \Gamma \) is called \emph{reversible} (or \emph{real}) if there exists \( h \in \Gamma \) such that \( hgh^{-1} = g^{-1} \). If \( h \) is an involution (i.e., \( h^2 = 1 \)), then \( g \) is \emph{strongly reversible} (or \emph{strongly real}). Every strongly reversible element other than involutions can be written as a product of two involutions. For more details on reversibility from a diverse viewpoint, see, e.g. \cite{BR, Sa, OS}. For torsion-free elements in a Fuchsian group $\Gamma$, a reversible element is always strongly reversible, and moreover, any element \( h \) satisfying \( hgh^{-1} = g^{-1} \) must be an involution. In the modular group \( \mathrm{PSL}(2, \mathbb{Z}) \), such elements are named \emph{reciprocal} due to 
their connections with the reciprocal integral binary quadratic forms of Gauss, see \cite{Sa}, \cite{bppz}. We shall also use the same terminology for Hecke groups. A conjugacy class containing a reciprocal element is called a \emph{reciprocal class}. When $g$ is hyperbolic and reciprocal, the subgroup generated by $g$ and a conjugating involution $h$ forms an infinite dihedral subgroup. Consequently, classifying reciprocal classes corresponds to classifying infinite dihedral subgroups up to conjugacy. The reciprocal classes also correspond to the closed geodesics of the corresponding hyperbolic orbifold $\mathbb{H}^2\slash \Gamma$ that passes through an even-ordered cone point. Such geodesics are called `recipocal geodesics'. Thus,  by counting reciprocal classes, we count reciprocal geodesics in $ \mathbb{H}^2\slash \Gamma$.

\medskip In this paper, we introduce a new approach to estimate the asymptotic growth of the number of  reciprocal classes in the Hecke group \( \Gamma_p \), valid for arbitrary \( p \). This method diverges from all prior works in this direction and is based on the Central Limit Theorem for normal  probability distributions. We use the free product structure for the Hecke groups. The  \emph{word length} of the conjugacy class of a word is the minimum length of a cyclically reduced word present in that conjugacy class. After establishing a \emph{normal form} for reciprocal words, we apply probabilistic techniques to prove our results, given below.

For a real number \( x \), let \( \lfloor x \rfloor \) denote the greatest integer less than or equal to \( x \), and \( \lceil x \rceil \) the smallest integer greater than or equal to \( x \).

We define $f \precsim g
$ for real-valued functions $f,g$ on $[0,\infty)$ if there exists a constant $C>0$ and $t_0$ such that $f(t)\leq Cg(t)$ for all $t\geq t_0.$
 We denote $f\simeq g $  if $f\precsim g$ and $g\precsim f$. Also, we denote $ f\sim g$ if the ratio of $f(t)$ and $g(t)$ tends to $1$, as $t\to \infty.$ Note that $f\sim g$ is a stronger condition than $f\simeq g.$
 Further, let $\Phi$ be the cumulative distribution function of the standard normal distribution $N(0,1)$, then $\Phi(x)>0$ for all finite $x\in \mathbb{R}$.  With these notations, we now state our main theorems.

	%Since the growth rates, when  $p$ is odd, follow from the thesis of  Marmolejo \cite{Ma}, without loss of generality, we assume throughout this paper that the $ p $ is even i.e., $p$ is of the form $2r$. We prove the following result. Moreover if we use odd $p$, we will mention for it.
		\begin{thm}\label{main1}
		For odd $p=2r+1 $, let $\mathcal{Q}_{x}$ be the set of reciprocal classes in the Hecke group $\Gamma_{p}$, which have at most $x$ word lengths. Then,
		\[|\mathcal{Q}_{x}|\sim 	\frac{1}{2}\sum_{n=1}^{\lfloor \frac{x}{2(r+1)}\rfloor}(2r)^{n}+\frac{1}{2}{\sum_{n=\lceil \frac{x}{2(r+1)}\rceil}^{\lfloor\frac{x}{4}\rfloor}(2r)^{n}\Phi\left(\frac{x-n\mu}{\sqrt{n}\sigma}\right)} \] where, $\mu=r+3$ and $\sigma^{2}=\frac{r^{2}-1}{3}.$
	\end{thm}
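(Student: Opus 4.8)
The plan is to convert the enumeration of reciprocal classes into a weighted sum of probabilities and then read off the three regimes of the claimed formula. Using the normal form for reciprocal words established above, every reciprocal class in $\Gamma_p$ with $p=2r+1$ has an even palindromic-with-complement representative
\[
\iota\,\gamma^{b_1}\iota\,\gamma^{b_2}\cdots\iota\,\gamma^{b_n}\,\iota\,\gamma^{p-b_n}\cdots\iota\,\gamma^{p-b_1},
\qquad b_i\in\{1,\dots,2r\}.
\]
The restriction to this even form is exactly where oddness of $p$ is used: since $\mathbb{Z}_p$ contains no involution, every conjugating involution is a conjugate of $\iota$, and a self-paired central syllable $\gamma^c$ with $2c\equiv 0 \pmod p$ is impossible for $c\neq 0$. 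First I would record that the assignment $(b_1,\dots,b_n)\mapsto$ class is generically two-to-one (the reflection symmetry supplies two admissible ``centres'', hence two palindromic representatives), the over-symmetric tuples forming a set of size $O\big((2r)^{\lceil n/2\rceil}\big)$ that is negligible against $(2r)^n$. Thus the number of size-$n$ reciprocal classes is $\tfrac12(2r)^n\big(1+o(1)\big)$.

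Next I would compute the word length of such a class. The two paired syllables contribute equal costs $\min(b_i,p-b_i)$ each, and the two interleaving $\iota$'s contribute $2$, so the length is $L=\sum_{i=1}^n Z_i$ with $Z_i:=2+2\min(b_i,p-b_i)$. As $b_i$ ranges uniformly over $\{1,\dots,2r\}$, the quantity $\min(b_i,p-b_i)$ is uniform on $\{1,\dots,r\}$ (each value attained by exactly two syllables), so $Z_i$ is i.i.d.\ uniform on $\{4,6,\dots,2(r+1)\}$ with $\mathbb{E}[Z_i]=r+3=\mu$ and $\mathrm{Var}(Z_i)=\tfrac{r^2-1}{3}=\sigma^2$. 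Letting $S_n=Z_1+\cdots+Z_n$, the fraction of size-$n$ tuples with length at most $x$ is exactly $\Pr[S_n\le x]$, whence
\[
|\mathcal{Q}_x|\;\sim\;\tfrac12\sum_{n\ge 1}(2r)^n\,\Pr[S_n\le x].
\]

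The three regimes now fall out of the deterministic bounds $4n\le S_n\le 2(r+1)n$. For $n\le\lfloor x/(2(r+1))\rfloor$ one has $S_n\le 2(r+1)n\le x$ surely, so $\Pr[S_n\le x]=1$ and these terms produce the first sum $\tfrac12\sum(2r)^n$. For $n>\lfloor x/4\rfloor$ one has $S_n\ge 4n>x$ surely, so $\Pr[S_n\le x]=0$ and these terms vanish. In the intermediate range $\lceil x/(2(r+1))\rceil\le n\le\lfloor x/4\rfloor$ the probability is genuinely fractional, and here I would invoke the Central Limit Theorem to replace it by $\Phi\!\big(\tfrac{x-n\mu}{\sqrt{n}\,\sigma}\big)$, giving the second sum and matching the stated expression term by term.

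The hard part is making the middle-regime approximation rigorous enough to yield the strong conclusion $\sim$ (ratio tending to $1$) rather than merely $\simeq$. A pointwise CLT is not sufficient because the summands are weighted by the geometrically growing factor $(2r)^n$ over a window whose width grows with $x$, so I would need a quantitative, uniform-in-$n$ estimate. The natural tool is the Berry--Esseen bound, giving $\big|\Pr[S_n\le x]-\Phi(\tfrac{x-n\mu}{\sqrt{n}\,\sigma})\big|\le C/\sqrt{n}$, after which one sums the weighted errors $\sum (2r)^n C/\sqrt{n}$ and compares with the main term. The genuine obstacle lies in the lower tail of the intermediate range, near $n\approx\lfloor x/4\rfloor$, where both the true and the Gaussian probabilities are small: there the additive Berry--Esseen error can be coarse relative to the exponentially small target, so the argument requires sharp control of the approximation precisely in this large-deviation part of the window and a careful verification that the accumulated error is of strictly lower order than the dominant part of the double sum. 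Establishing this uniform error control over the growing summation range is, to my mind, the technical heart of the proof; the combinatorial reduction and the identification of $\mu$ and $\sigma^2$ are comparatively routine once the normal form is in hand.
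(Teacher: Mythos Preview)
Your plan coincides with the paper's proof in every structural respect: the same normal form, the same identification of the class word length as $S_n=\sum_{i=1}^n 2(|k_i|+1)$ with the $k_i$ uniform on $\{\pm 1,\dots,\pm r\}$, the same $\mu=r+3$ and $\sigma^2=(r^2-1)/3$, the same trichotomy on $n$ (deterministically $\le x$, deterministically $>x$, and the genuine middle window), and the same factor $\tfrac12$ coming from the two palindromic representatives per class.

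The only point of divergence is the error control in the middle regime, which you flag as the crux. The paper does not use Berry--Esseen. Instead it sets $\epsilon=\tfrac{1}{x}\,\Phi_{\min}$, where $\Phi_{\min}$ is the smallest value of $\Phi\bigl(\tfrac{x-n\mu}{\sqrt n\,\sigma}\bigr)$ over the window (attained at the upper endpoint of $n$), invokes the uniform-in-$z$ form of the CLT to get a threshold $K$ with additive error $<\epsilon/2$ for all $n\ge K$, and then observes that after summing and dividing by the target sum the geometric weights $(2r)^n$ cancel, leaving $\tfrac{\epsilon/2}{\Phi_{\min}}=\tfrac{1}{2x}\to 0$. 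So the tail you worried about is absorbed by the \emph{choice} of $\epsilon$ rather than by a large-deviation estimate. Your instinct that something quantitative is needed is still on point, however: the paper asserts that for large $x$ every $n$ in the window satisfies $n\ge K$, but since $\epsilon$ depends on $x$ (and decays rapidly), a rate on $K(\epsilon)$ is implicitly required to close that loop, and the paper does not supply one. A Berry--Esseen bound of the kind you propose would make that step fully rigorous.
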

	
 \begin{thm}\label{main2}
	For even $p=2r $, let $Sym_x$ be the set of symmetric reciprocal classes and  $T_{x}$ be the set of  all reciprocal classes respectively in the Hecke group $\Gamma_{p}$ with word length at most $x$.  Then,
	\[|T_x|\simeq 	\frac{1}{2}\sum_{n=1}^{\lfloor \frac{x}{2(r+1)}\rfloor}(2r-1)^{n}+\frac{1}{2}{\sum_{n=\lceil \frac{x}{2(r+1)}\rceil}^{\lfloor\frac{x}{4}\rfloor}(2r-1)^{n}\Phi\left(\frac{x-n\mu}{\sqrt{n}\sigma}\right)} \] 

    where, $\mu=\frac{2r^2 + 4r - 2}{2r - 1}$ and $\sigma^{2}=\frac{16r^3 + 36r^2 + 32r - 12}{6(2r - 1)}.$
\end{thm}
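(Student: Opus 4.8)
The plan is to reduce the counting of reciprocal classes to a combinatorial count of cyclically reduced reciprocal normal forms, and then to approximate that count using the Central Limit Theorem, mirroring the structure that must already underlie \thmref{main1}. First I would set up the normal form for reciprocal words in the even case $p=2r$. Since $\Gamma_p \cong \mathbb{Z}_2 * \mathbb{Z}_p$ with the extra involution $\gt = \gamma^r$, a reciprocal element $g$ is strongly reversible, so $g = \iota_1 \iota_2$ is a product of two involutions, where each involution is conjugate either to $\iota$ or to $\gt$. Writing $g$ as a cyclically reduced word in the free product, the reversibility condition $h g h^{-1} = g^{-1}$ with $h^2 = 1$ forces a palindromic-type symmetry on the syllable sequence. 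The key structural step is to classify these symmetric words: a cyclically reduced reciprocal word is determined by a ``half-word'' whose reflection recovers the full word, with the fixed points of the reflection being the two involutions $\iota_1, \iota_2 \in \{\iota, \gt\}^{\Gamma_p}$. This is where the even case genuinely differs from \thmref{main1}: the presence of a second conjugacy class of involutions (namely $\gt$) splits reciprocal classes into \emph{symmetric} and non-symmetric types, which is exactly why $Sym_x$ appears alongside $T_x$ in the statement, and why the count aggregates over both kinds.

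Next I would translate the word-length constraint into a statement about syllable sums. Each $\mathbb{Z}_p$-syllable $\gamma^k$ with $1 \le k \le p-1$ contributes a word length equal to the minimal word length of $\gamma^k$ as a product of $\iota$ and $\gamma$; because $\gamma = \iota\alpha_p$ and $\iota,\alpha_p$ are the actual Hecke generators, $\gamma^k$ has length depending on $k$ (being cheaper to reach $\gamma^k$ from either end, so the cost is roughly $\min(k, p-k)$ up to the $\iota$-insertions). Summing these contributions over the $n$ syllables of the half-word gives the total word length as a sum $L = \sum_{i=1}^{n} X_i$ of i.i.d. random variables $X_i$, where each $X_i$ is the length-contribution of a uniformly chosen exponent from the $2r-1$ admissible syllable values. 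The plan is to compute the mean $\mu = \mathbb{E}[X_i]$ and variance $\sigma^2 = \mathrm{Var}(X_i)$ of this per-syllable cost distribution; I expect the stated values $\mu = \frac{2r^2+4r-2}{2r-1}$ and $\sigma^2 = \frac{16r^3+36r^2+32r-12}{6(2r-1)}$ to emerge from these elementary (if tedious) sums over the syllable lengths, with the $2r-1$ in each denominator reflecting the number of admissible syllable types per position.

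With the probabilistic model in place, the core estimate follows the same mechanism as \thmref{main1}. For each fixed number of syllables $n$, the number of reciprocal half-words is $(2r-1)^n$ up to the symmetry bookkeeping, and among these the proportion whose total length $L = \sum X_i \le x$ is, by the Central Limit Theorem applied to the standardized sum $\frac{L - n\mu}{\sqrt{n}\,\sigma}$, asymptotically $\Phi\!\left(\frac{x - n\mu}{\sqrt{n}\,\sigma}\right)$. I would split the sum over $n$ at the threshold $n_0 = \frac{x}{2(r+1)}$: for $n \le \lfloor n_0 \rfloor$ the typical length $n\mu$ is comfortably below $x$, so essentially all $(2r-1)^n$ half-words satisfy the constraint and contribute the full $\Phi \to 1$ (giving the first, unweighted sum); for $\lceil n_0 \rceil \le n \le \lfloor x/4 \rfloor$ the constraint is genuinely active and the Gaussian weight $\Phi$ is retained; beyond $n = \lfloor x/4 \rfloor$ the shortest possible length $2\cdot 2 \cdot (n/2)$ exceeds $x$ and the contribution vanishes. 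The factors of $\tfrac{1}{2}$ absorb the two-fold overcounting from the reflection symmetry of the half-word construction and the pairing of the two involution types.

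The main obstacle, and the essential difference from the odd case, will be \textbf{controlling the symmetric classes and the resulting $\simeq$ versus $\sim$}. In the odd case every reciprocal class has a single involution type available ($\iota$ only, up to conjugacy), so the overcounting is clean and one obtains the sharp $\sim$ asymptotic. In the even case the involution $\gt$ creates reciprocal classes that are symmetric under an extra automorphism, and these must be counted with care to avoid double-counting while not discarding genuinely distinct classes; the symmetric classes form a lower-order but non-negligible correction whose exact multiplicity I do not expect to pin down to leading-order constant, which is precisely why \thmref{main2} asserts only the weaker two-sided bound $|T_x| \simeq (\cdots)$ rather than the sharp $\sim$. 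Establishing the matching upper and lower bounds—showing that the symmetric contributions and the involution-type bookkeeping inflate or deflate the count by at most a bounded multiplicative factor—will be the delicate part, requiring a separate estimate of $|Sym_x|$ and a demonstration that $|Sym_x|$ is dominated by $|T_x|$ on the relevant length scale.
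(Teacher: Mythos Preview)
Your CLT framework and the split of the sum over $n$ at $\lfloor x/2(r+1)\rfloor$ and $\lfloor x/4\rfloor$ are exactly right, and the $(2r-1)^n$ count with the stated $\mu,\sigma^2$ does emerge from the per-syllable length distribution on $\{-r+1,\dots,-1,1,\dots,r\}$. However, you have the roles of $Sym_x$ and the remaining classes reversed, and this misreading drives your plan for the final step in the wrong direction.

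In the paper's terminology, a \emph{symmetric} reciprocal class is not one with an extra automorphism symmetry; it is simply a class whose reciprocators are all conjugate to $\iota$ (see \lemref{lem}(1)). The displayed right-hand side of \thmref{main2} is precisely the asymptotic for $|Sym_x|$, proved with the sharp relation $\sim$ in \lemref{4.6} via exactly the CLT argument you outline. So $Sym_x$ is the \emph{main} term, not a lower-order correction. The passage from $|Sym_x|$ to $|T_x|$ then goes the opposite way from what you propose: one needs $|T_x|\precsim |Sym_x|$, not that $|Sym_x|$ is dominated by $|T_x|$ (the latter is trivial since $Sym_x\subset T_x$). The paper obtains this by observing from \lemref{lem}(2),(3) that the normal forms of $p$-reciprocal and symmetric $p$-reciprocal classes carry an extra block $\iota\gt$ (respectively two such blocks), so their counting sums terminate at $\lfloor (x-q)/4\rfloor$ with $q=r+1$ or $q=2r+2$; this is only a constant shift in the exponent, giving $|Rec_x|+|SR_x|\precsim |Sym_x|$ and hence $|T_x|\simeq |Sym_x|$. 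The weakening from $\sim$ to $\simeq$ thus arises because these \emph{non}-symmetric types contribute at the same exponential order with an uncontrolled leading constant, not because of any overcounting within the symmetric classes themselves.
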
	\medskip

An element $a$ of a group $G$ is said to be a primitive element if it cannot be presented as $b^k$ for some element $b\in G$ and $k\in \z$. In the case of Hecke groups, the primitive hyperbolic elements are invariant under conjugation, and they are responsible for getting primitive geodesics in the orbit space of the Hecke group.

In the following corollary, we have shown that the asymptotic growth of the number of reciprocal classes is the same as the asymptotic growth of the number of primitive reciprocal classes in Hecke groups. 
 \begin{cor}\label{cormain}
     Let $W^p_x$ and $W_x$ be the collection of primitive reciprocal classes, and the collection of reciprocal classes, resp., present in the Hecke group $\Gamma_p$. Then, we have
     \[W^p_x\sim W_x.\]
 \end{cor}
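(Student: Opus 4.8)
The plan is to show that the number of non-primitive reciprocal classes grows strictly slower than the total count, so that removing them does not change the leading asymptotics. A non-primitive reciprocal class is represented by a cyclically reduced word that is a proper power $b^k$ with $k \geq 2$ of some word $b$. The key point is that if $b^k$ has word length at most $x$, then the underlying word $b$ has word length at most $x/k \leq x/2$. Hence every non-primitive reciprocal class of word length at most $x$ arises from a primitive (or shorter) reciprocal element of word length at most $x/2$, raised to some power $k$ with $2 \leq k \leq x/\ell$, where $\ell$ is the length of $b$.

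First I would set up the counting estimate. Let $R(x) = |W_x|$ be the total number of reciprocal classes of word length at most $x$, so that by \thmref{main1} (odd case) or \thmref{main2} (even case) we have $R(x) \simeq c^{x/4}$ for the relevant base $c$ (namely $c = 2r$ or $c = 2r-1$), since the dominant contribution in the displayed sums is the top term $n = \lfloor x/4 \rfloor$, giving exponential growth of the form $c^{x/4}$ up to polynomial and $\Phi$ factors. Next I would bound the number $N(x)$ of non-primitive reciprocal classes of word length $\le x$. Each such class comes from a reciprocal class of word length $\le x/2$ raised to a power; crudely, $N(x) \le \sum_{k \ge 2} R(x/k) \le R(x/2)\cdot \log_2 x$, since $R$ is increasing and the number of admissible powers $k$ is at most $\log_2 x$.

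Then I would compare growth rates. Since $R(x) \simeq c^{x/4}$, we get $R(x/2) \simeq c^{x/8}$, and therefore
\[
\frac{N(x)}{R(x)} \precsim \frac{(\log_2 x)\, c^{x/8}}{c^{x/4}} = (\log_2 x)\, c^{-x/8} \longrightarrow 0
\]
as $x \to \infty$. Consequently $|W^p_x| = R(x) - N(x)$ satisfies $|W^p_x|/R(x) = 1 - N(x)/R(x) \to 1$, which is precisely the statement $W^p_x \sim W_x$. The argument is uniform in $p$ because the base $c \geq 2r \geq 2$ (resp. $c \geq 2r-1 \geq 1$, and one checks $c > 1$ for all relevant $p$) exceeds $1$, so the exponential gap always dominates the logarithmic and polynomial corrections.

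The main obstacle I anticipate is making the bound $N(x) \le R(x/2)\log_2 x$ fully rigorous, since one must confirm that a non-primitive reciprocal class $[b^k]$ is itself counted by a reciprocal class of the root $b$ (or at least that the map sending $[b^k]$ to a suitable root is injective up to the bounded multiplicity coming from the choice of $k$), and that the root $b$ of a reciprocal element can be taken reciprocal or is otherwise controlled by $R$. This requires knowing that in the free product structure $\mathbb{Z}_2 * \mathbb{Z}_p$, roots of reciprocal elements behave well and that taking a power exactly multiplies the cyclically reduced word length by $k$; both facts follow from the normal form for reciprocal words established earlier and from the standard fact that in a free product a cyclically reduced word of length $\ell$ has its $k$-th power cyclically reduced of length $k\ell$. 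Once this is in place, the comparison of exponential versus subexponential growth is routine and gives the corollary for both parities of $p$ simultaneously.
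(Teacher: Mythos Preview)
Your overall strategy matches the paper's: bound the number of non-primitive reciprocal classes by a polynomial factor times $|W_{\lfloor x/2\rfloor}|$, then show this is negligible compared to $|W_x|$. The paper's \lemref{7.1} gives the cruder bound $|W^{np}_x|\le \tfrac{x(x+1)}{2}\,|W_{\lfloor x/2\rfloor}|$ (via divisor counting) where you use the tighter $N(x)\le (\log_2 x)\,R(x/2)$; either is adequate, and your remark that the primitive root of a reciprocal hyperbolic element is again reciprocal is correct in the free product (the paper uses this implicitly).

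The substantive difference, and the one genuine gap, is in how you deduce that $|W_{\lfloor x/2\rfloor}|/|W_x|\to 0$ fast enough. You assert $R(x)\simeq c^{x/4}$ on the grounds that the dominant term in the sums of \thmref{main1} and \thmref{main2} is at $n=\lfloor x/4\rfloor$. This is not justified, and for $r>1$ it is false: at $n=x/4$ the argument of $\Phi$ equals $\bigl(x-\tfrac{x}{4}\mu\bigr)/\bigl(\sqrt{x/4}\,\sigma\bigr)$, and since $\mu=r+3>4$ this tends to $-\infty$ like $-C\sqrt{x}$, so the $\Phi$ factor decays like $e^{-C'x}$ and suppresses the top term. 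The true exponential rate of $R(x)$ therefore lies strictly between $(2r)^{x/(2(r+1))}$ and $(2r)^{x/4}$ and would require a separate Laplace/saddle-point analysis you have not supplied; without it your ratio estimate $R(x/2)/R(x)\precsim c^{-x/8}$ does not follow.

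The paper avoids this issue entirely via \lemref{7.2}, which proves the submultiplicative inequality $2\,|W_{\lfloor x/2\rfloor}|^{2}\le C^{2}\,|W_x|$ directly from the combinatorics (a pair of solutions of total length $\le x/2$ concatenates to a solution of length $\le x$). This gives $|W_{\lfloor x/2\rfloor}|/|W_x|\le C^{2}/\bigl(2\,|W_{\lfloor x/2\rfloor}|\bigr)$, and since $|W_{\lfloor x/2\rfloor}|$ is bounded below by an exponential in $x$ (immediate from the first sum in the main theorems), any polynomial prefactor is absorbed. Inserting this lemma in place of your unproven growth-rate claim repairs your argument and makes it essentially identical to the paper's.
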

	As a corollary to the first theorem, we recover the following result. This was proven by  Basmajian and Suzzi-Valli in \cite[Theorem 1.1]{BV}. 
    \begin{cor}\label{1.4}
    Let $\left|\gamma\right|$ be the minimal word length in the conjugacy
	          	class of a lift of a reciprocal geodesic $\gamma$ in $\rm PSL(2,\z)\cong \Gamma_3$. Then
	          	$$\left|~\gamma \text{ a primitive reciprocal geodesic with }\left|\gamma\right|\leq 2t~\right|\sim 2^{\lfloor \frac{t}{2}\rfloor}. $$

	          \end{cor}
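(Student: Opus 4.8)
The plan is to specialize \thmref{main1} to $p=3$ and then pass from all reciprocal classes to primitive ones via \corref{cormain}. Setting $p=2r+1=3$ gives $r=1$, so $\Gamma_3\cong \mathrm{PSL}(2,\z)$, and, as explained in the introduction, reciprocal classes of $\Gamma_3$ correspond bijectively to reciprocal geodesics of $\mathbb{H}^2\slash\Gamma_3$, with the quantity $|\gamma|$ of the corollary equal to the conjugacy-class word length counted by $\mathcal{Q}_x$. With $r=1$ the parameters of \thmref{main1} become $2r=2$, $\mu=r+3=4$, and $\sigma^2=(r^2-1)/3=0$, so the theorem should collapse to a single geometric sum.

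The first substantive step is to argue that for $r=1$ the Gaussian sum in \thmref{main1} does not contribute to the leading asymptotic. Its index runs from $\lceil x/2(r+1)\rceil=\lceil x/4\rceil$ to $\lfloor x/4\rfloor$, a range that is empty unless $4\mid x$; moreover $\sigma^2=0$ means the transition region on which the Central Limit Theorem is invoked degenerates to a point. Hence \thmref{main1} reduces to
\[
|\mathcal{Q}_x|\sim \frac{1}{2}\sum_{n=1}^{\lfloor x/4\rfloor}2^{n}.
\]

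The remaining computation is routine. Using $\sum_{n=1}^{N}2^{n}=2^{N+1}-2$ with $N=\lfloor x/4\rfloor$ gives
\[
\frac{1}{2}\sum_{n=1}^{\lfloor x/4\rfloor}2^{n}=2^{\lfloor x/4\rfloor}-1\sim 2^{\lfloor x/4\rfloor},
\]
so $|\mathcal{Q}_x|\sim 2^{\lfloor x/4\rfloor}$. Substituting $x=2t$, as in the bound $|\gamma|\le 2t$, and using $\lfloor 2t/4\rfloor=\lfloor t/2\rfloor$, I obtain $|\mathcal{Q}_{2t}|\sim 2^{\lfloor t/2\rfloor}$. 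Finally, \corref{cormain} asserts $W^p_x\sim W_x$; applied with $p=3$ and $x=2t$ it yields $W^3_{2t}\sim W_{2t}=\mathcal{Q}_{2t}\sim 2^{\lfloor t/2\rfloor}$. Since primitive reciprocal classes correspond to primitive reciprocal geodesics, this is exactly the claimed asymptotic.

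The one genuinely delicate point is the degenerate case $\sigma^2=0$. One must verify that the collapse of the CLT transition region, equivalently the vanishing (or boundary) behavior of the $\Phi$-term on the subsequence $4\mid x$, leaves the leading coefficient untouched rather than introducing a spurious constant factor; otherwise the count on that subsequence would be off by a multiplicative constant and the stronger relation $\sim$ would fail. Once this degeneracy is handled, the rest is the elementary geometric-sum evaluation above together with the transfer from \corref{cormain}.
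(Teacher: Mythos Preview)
Your proposal is correct and follows essentially the same route as the paper: specialize \thmref{main1} at $r=1$, observe that the $\Phi$-sum collapses (its index range is $\lceil x/4\rceil\le n\le\lfloor x/4\rfloor$), evaluate the remaining geometric sum, and then invoke \corref{cormain} to pass to primitive classes. Regarding the degeneracy $\sigma^2=0$ that you flag as delicate: it is in fact harmless, because for $r=1$ each $k_i\in\{-1,1\}$ forces $X_i=2(|k_i|+1)=4$ deterministically, so $S_n=4n$ with no randomness; thus the first sum in \thmref{main1} is an exact count rather than an asymptotic, and no CLT correction can arise on the boundary term $n=x/4$.
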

    
    \medspace

We now remark about known works in this direction.  The growth and distribution of geodesics representing reciprocal classes have been studied from various viewpoints.  In \( \mathrm{PSL}(2, \mathbb{Z}) \), Sarnak \cite{Sa} classified, counted and discussed equidistribution results for the reciprocal classes in the modular group, also see the works \cite{BK}, \cite{bv2}, \cite{bl} on related directions.  Erlandsson and Souto \cite{ES} generalized Sarnak's counting result for reciprocal classes in a lattice of ${\rm PSL}(2,\mathbb R)$ that contains an involution. These studies have used the hyperbolic length as a tool for the asymptotic count. Recently, Parkkonen and Paulin \cite{PP} have introduced a thermodynamic formalism to further generalize the earlier results.

An alternative approach, based on the free product structure of the modular group, was developed by Basmajian and Suzzi Valli \cite{BV}, who analyzed reciprocal class growth via combinatorial word lengths in the modular group. Marmolejo \cite{Ma} extended this to Hecke groups \( \Gamma_p \) for odd \( p \), determining the \emph{spherical growth}—the number of reciprocal classes represented by words of a fixed length. It has been extended to all Hecke groups in \cite{DG2}, also see \cite{bmv}.

	 Now, we briefly mention the structure of this paper. After recalling some preliminary notions, we note preliminary observations about the above counting function in \secref{prel}. Finally, we prove the above theorems in \secref{4}.

	\section{Preliminaries}\label{prel}
	%In \secref{3}, we will see the absolute number of conjugacy classes of different type of reciprocal elements
	
	\subsection{Reciprocal elements in Hecke groups}In \cite{DG}, different types of reciprocal elements for Hecke groups were defined. Here the following types are:
	\begin{defn}
		The reciprocal word is said to be \textit{symmetric} if it can be seen as a product of those elements which are only conjugate to $ \io $.
	\end{defn}
	\begin{defn}
		The reciprocal word is said to be \textit{$ p $-reciprocal} if it can be seen as a product of those elements which are only conjugate to $ \gt $.
	\end{defn}
	\begin{defn}
		The reciprocal word is said to be \textit{symmetric $ p $-reciprocal} if it can be seen as a product of conjugate of $\io$ and conjugate of $ \gt $ respectively.
	\end{defn}
	\begin{rk}
		If $p$ is an odd integer, all reciprocal elements are symmetric. Other types appear in the even case.
	\end{rk}
	\subsection{Word Length} In this section, we will define the word length of the elements of the free products and conjugacy classes.
	\begin{defn}\label{word}
		The\textit{ length} of a word is the number of the generating elements of the group presented in its reduced presentation.
	\end{defn}
	For instance, we consider the Hecke group $\Gamma_{p} =\langle a, b ~ | ~ a^{2},b^{p} \rangle  $ , where $p\geq 3$ , then the length of the reduced word $ ab^{k_{1}}ab^{k_{2}}\dots ab^{k_{n}} $ , where $-p/2\leq k_{i}\leq p/2 $, is $ \sum_{i=1}^{n}\left(1+|{k_{i}}|\right) $.
	\begin{defn}
		The \emph{length} of the conjugacy class of a word is the minimum length of a cyclically reduced word present in that conjugacy class.
	\end{defn}
    
	Before discussing on asymptotic growth of reciprocal classes, we need to recall the following results.
	\begin{theorem}{\cite{DG}}\label{th}
		For any reciprocal hyperbolic element in $ \Gamma_{p} $, the following possibilities can occur :
		\begin{enumerate}
			\item If p is odd, every reciprocal class has exactly four symmetric elements.
			\item If p is even.
			\begin{enumerate}
				\item Suppose in the reciprocal class, the reciprocators are conjugate to $ \iota $. Then, there are exactly four symmetric elements in that class.
				\item Suppose in the reciprocal class, the reciprocators conjugate to $\gamma_{p}^{r} $. Then, there are exactly $ 2p $ many $p$-reciprocal elements. 
				\item Suppose in the reciprocal class, each reciprocal element has two types of reciprocators; one type of reciprocator conjugates to $ \iota $, and another type of reciprocator conjugates to $ \gamma_{p}^{r} $.
				\begin{enumerate}
					
					\item If the reciprocal class does not contain any non-zero power of $ \iota \g^r$, then the class has exactly two symmetric elements and $ p $ many $p$-reciprocal elements.

					\item If the reciprocal class contain any non-zero power of $ \iota \g^r$, then the class has exactly two symmetric p-reciprocal elements and $p-2$  $p$-reciprocal elements.			\end{enumerate}
			\end{enumerate}
		\end{enumerate}
		
	\end{theorem}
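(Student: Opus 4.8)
The plan is to reduce the statement to the structure of products of two involutions together with the normal form theorem for the free product $\mathbb{Z}_2 * \mathbb{Z}_p$. As recalled in the introduction, a reciprocal hyperbolic element $g$ is strongly reversible and every reverser is an involution, so writing a reversing involution $h$ as $g = (gh)\,h$ exhibits $g$ as a product of two involutions. First I would observe that the reversers of $g$ form a single coset $h\langle g_0\rangle$ of its centralizer, where $g_0$ is the primitive root of $g$; since $h$ inverts $g_0$, every element of this coset is again an involution, so the decompositions of $g$ into two involutions are governed entirely by this coset. I would then classify the involutions of $\Gamma_p$: by the normal form theorem the only torsion elements are conjugates of powers of $\iota$ and of $\gamma$, whence the involutions are exactly the conjugates of $\iota$ when $p$ is odd, and, when $p=2r$, also the conjugates of $\tilde\gamma=\gamma^r$. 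This immediately yields the dichotomy between the odd case (all reciprocal elements symmetric) and the even case, where symmetric, $p$-reciprocal and mixed types can all occur.

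The second step is to set up a normal form for a reciprocal class. A conjugacy class of a hyperbolic element is encoded by its cyclic word $(a_1,\dots,a_n)$ read off from $\iota\gamma^{a_1}\cdots\iota\gamma^{a_n}$, taken up to rotation, and the class is reciprocal precisely when the reverse–negation $(-a_n,\dots,-a_1)$ is a rotation of $(a_1,\dots,a_n)$, i.e. when the cyclic word carries a reflection symmetry. A cyclically reduced representative is then a \emph{symmetric} element exactly when it can be read, after cyclic conjugation, as a palindromic product $h_1h_2$ whose two reflection centres are involutions conjugate to $\iota$; it is $p$-reciprocal when both centres are conjugate to $\tilde\gamma$, and symmetric $p$-reciprocal when the two centres are of opposite type. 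Thus counting the symmetric (resp. $p$-reciprocal, resp. mixed) representatives of the class amounts to counting the reflection centres of the cyclic palindrome, weighted by the conjugacy type of the involution sitting at each centre.

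The counts themselves I would extract from this reflection picture. The reflection symmetry of an aperiodic cyclic word determines a fixed pair of centres on the associated closed geodesic, and together with the interchange $g\leftrightarrow g^{-1}$ forced by reciprocity this produces the four symmetric elements in the odd case and in case (2)(a). In the even case one must record which centre is of type $\iota$ and which of type $\tilde\gamma$. The factor producing $2p$ in case (2)(b) comes from the enlarged stabilizer of $\tilde\gamma=\gamma^r$: its centralizer contains the order-$p$ group $\langle\gamma\rangle$, so each $\tilde\gamma$-centre realizes $p$ distinct involutions rather than one, inflating the $p$-reciprocal count accordingly. The mixed case (2)(c) then splits according to whether the cyclic word contains a power of $\iota\gamma^r$: this is exactly the condition forcing one symmetric centre to coincide with a $\tilde\gamma$-centre, converting two symmetric elements into symmetric $p$-reciprocal ones and lowering the $p$-reciprocal count from $p$ to $p-2$.

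I expect the main obstacle to be the even-case bookkeeping, and in particular case (2)(c). There one must verify that the two reflection centres genuinely carry distinct conjugacy types, rule out the spurious extra symmetries that would appear only for periodic (non-primitive) cyclic words, and pin down precisely how an occurrence of $\iota\gamma^r$ interacts with the centres and with the order-$p$ stabilizer of $\tilde\gamma$. By contrast, the odd case and the fixed-type even cases should follow routinely once the coset description of reversers, the involution classification, and the reflection-centre count are in place; the delicate argument is the interplay between the $\langle\gamma\rangle$-symmetry of $\tilde\gamma$ and the palindromic reflection symmetry in the mixed case.
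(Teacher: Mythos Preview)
The paper does not contain a proof of this theorem at all: it is quoted verbatim from \cite{DG} (note the ``\texttt{\textbackslash cite\{DG\}}'' immediately after the theorem heading) and used only as background for the counting results of Section~\ref{4}. There is therefore no proof in the present paper against which to compare your proposal; for the actual argument you would need to consult \cite{DG}.

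That said, your outline is the natural strategy and is broadly in line with how \cite{DG} proceeds: classify involutions via the Kurosh subgroup theorem for $\mathbb{Z}_2*\mathbb{Z}_p$, describe reversers as a coset of the cyclic centralizer $\langle g_0\rangle$, and translate the count into reflection symmetries of the cyclic word. One point in your sketch is not right as written. You explain the factor $2p$ in case (2)(b) by saying that the centralizer of $\tilde\gamma=\gamma^r$ contains $\langle\gamma\rangle$ and hence ``each $\tilde\gamma$-centre realizes $p$ distinct involutions.'' But conjugating $\tilde\gamma$ by powers of $\gamma$ returns $\tilde\gamma$ itself, not $p$ different involutions; the centralizer statement is true but does not produce new involutions. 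The factor $p$ instead comes from the $p$ distinct cyclically reduced words in the conjugacy class obtained by cycling past a $\tilde\gamma$-block: the block $\iota\gamma^r$ can be split as $\gamma^{j}\cdot\iota\cdot\gamma^{r-j}$ under cyclic conjugation in $p$ inequivalent ways, each yielding a $p$-reciprocal representative. This is exactly the mechanism that, in case (2)(c)(ii), drops the count from $p$ to $p-2$ when a $\tilde\gamma$-centre coincides with an $\iota$-centre. You have correctly identified (2)(c) as the case requiring the most care; your description of the obstacle there is accurate.
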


\begin{lemma}\label{lem}\cite[Lemma 3.2]{DG2} Let $[{\tilde{\alpha}}]$ be a reciprocal class in $\Gamma_p$.  Then, we can choose a cyclically reduced representative $\alpha$ (or $\alpha^{-1}$) of this class as follows.  
		
		\begin{enumerate}
			\item Suppose all the reciprocators of $[\alpha]$ are conjugate to $\io$. Then  $\alpha$ is of the form  \\$ \io\g^{k_{1}}\io \g^{k_{2}}\dots \io \g^{k_{n}}\io \g^{-k_{n}}\dots\io \g^{-k_{2}}\io \g^{-k_{1}}  $.
			\item Suppose all the reciprocators of $[\alpha]$ are  conjugate to $ \gt $. Then $\alpha$ is of the form  \n  $ \io \gt\io \g^{k_{1}}\io \g^{k_{2}}\dots \io \g^{k_{n}}\io \gt\io \g^{-k_{n}}\dots\io \g^{-k_{2}}\io \g^{-k_{1}} $.
			\item Suppose  $[\alpha]$ has two types of reciprocators, some are conjugate to  $ \io $ and others conjugate to $ \gt $. 
			If the reciprocal class does not contain any power of $\io \gt$,   then $\alpha$ has the form  either \n
			$ \io\gt\io  \g^{k_{1}}\io \g^{k_{2}}\dots \io \g^{k_{n}}\io \g^{-k_{n}}\dots\io \g^{-k_{2}}\io \g^{-k_{1}} $ or $\io  \g^{k_{1}}\io \g^{k_{2}}\dots \io \g^{k_{n}}\io\gt\io \g^{-k_{n}}\dots\io \g^{-k_{2}}\io \g^{-k_{1}}$.

			If $[\alpha]$ contains any non-trivial power of $\io \gt  $, then it has a unique representative of the form $ (\io\gt )^{k}$ where $ k\in \z$. 
	\end{enumerate}\end{lemma}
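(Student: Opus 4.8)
The plan is to convert reciprocity into a reflection symmetry of cyclic words in the free product and to read the four displayed shapes off the location of the reflection axis. First I would record the product-of-involutions structure. Since $\tilde{\alpha}$ is hyperbolic, hence torsion-free, any reciprocator $h$ (with $h\tilde{\alpha}h^{-1}=\tilde{\alpha}^{-1}$) is forced to be an involution, as recalled in the introduction. Using $h=h^{-1}$ one computes $(h\tilde{\alpha})^{2}=(h\tilde{\alpha}h)\tilde{\alpha}=\tilde{\alpha}^{-1}\tilde{\alpha}=1$, so $h\tilde{\alpha}$ is again an involution, and $(h\tilde{\alpha})\tilde{\alpha}(h\tilde{\alpha})^{-1}=h\tilde{\alpha}h=\tilde{\alpha}^{-1}$ shows that it too is a reciprocator. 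Hence $\tilde{\alpha}=h\cdot(h\tilde{\alpha})$ is a product of two reciprocating involutions. In $\Gamma_p\cong\z_2*\z_p$ the only conjugacy classes of involutions are that of $\io$ and, when $p=2r$, that of $\gt=\g^{r}$; so the types available to the unordered pair $\{h,\,h\tilde{\alpha}\}$ reproduce exactly the case division of \thmref{th}.

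Next I would pass to a cyclically reduced syllable representative $\alpha=\io\g^{a_1}\io\g^{a_2}\cdots\io\g^{a_m}$, normalizing $|a_i|\le r$, and translate reciprocity into combinatorics. By the conjugacy criterion for free products, $\alpha\sim\alpha^{-1}$ precisely when the cyclic syllable sequence of $\alpha$ coincides, up to rotation, with that of $\alpha^{-1}$, namely $(-a_m,\dots,-a_1)$; equivalently, the cyclic word of $2m$ syllables admits an order-reversing, exponent-inverting symmetry, that is a reflection, and this reflection is the combinatorial shadow of the involution $h$. Such a reflection cannot fold two adjacent syllables onto one another, since neighbouring syllables have opposite type ($\io$ versus a $\g$-block) and an $\io$ is never the inverse of a $\g$-block; hence its axis runs through two opposite syllables, each of which must equal its own inverse. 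An $\io$ always does; a block $\g^{a}$ does only when $a\equiv-a\pmod p$, i.e. only for $a=r$, the syllable $\gt$, which is available only when $p=2r$. Thus each axis endpoint is an $\io$ (yielding an $\io$-conjugate reciprocator) or a $\gt$ (yielding a $\gt$-conjugate reciprocator).

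Rotating $\alpha$ so that one axis endpoint sits at the front and unfolding the palindrome about the two endpoints then produces the stated normal forms: both endpoints $\io$ gives case (1), both $\gt$ gives case (2), and one of each gives the two words of case (3), the alternative accounting for the choice of $\alpha$ or $\alpha^{-1}$. The degenerate configuration, in which the only syllables present are the two axis-syllables $\io$ and $\gt$ themselves, is exactly when $\alpha$ is conjugate into the infinite dihedral group $\langle\io,\gt\rangle$, forcing $\alpha=(\io\gt)^{k}$; here the dihedral translation length pins down $k$, giving the unique representative.

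The step I expect to be the main obstacle is the bookkeeping that makes the axis argument rigorous over two alternating syllable types. One must check that the two fixed syllables occupy the forced opposite positions so that the $\io$'s and $\g$-blocks pair off consistently under the reflection, treat the parity of $m$ separately (for $p$ odd, only $\io$-axes occur, which forces an even number of $\g$-blocks), and confirm that the two combinatorial axes are realized by the algebraic involutions $h$ and $h\tilde{\alpha}$ of the first step rather than by accidental symmetries. Matching the resulting list of shapes against the exact counts of symmetric and $p$-reciprocal elements in \thmref{th} then confirms completeness and excludes any spurious normal form.
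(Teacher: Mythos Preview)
The paper does not actually prove this lemma: it is quoted from \cite[Lemma~3.2]{DG2} and used without argument, so there is no in-paper proof against which to compare your proposal.

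That said, your approach is sound and is the natural one. Translating $\alpha\sim\alpha^{-1}$ into a reflection symmetry of the cyclic syllable word in $\z_2*\z_p$, observing that the axis cannot pass through an edge (adjacent syllables have opposite type and are never mutual inverses), and hence must pass through two diametrically opposite self-inverse syllables (necessarily $\io$, or $\gt$ when $p=2r$), is exactly the mechanism that yields the palindromic shapes after rotating an axis endpoint to the front. Your parity remark is the right bookkeeping: in the alternating $2m$-cycle the $\io$'s sit at odd positions and the $\g$-blocks at even ones, so two axis syllables of the same type force $m$ even (cases (1) and (2)) while mixed type forces $m$ odd (case (3)). The degenerate situation where the only syllables are the axis syllables themselves is precisely $\alpha\in\langle\io,\gt\rangle$, giving $(\io\gt)^k$. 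The one point you flag---that the two combinatorial axes really are realized by the algebraic reciprocators $h$ and $h\tilde{\alpha}$---goes through: conjugating $\alpha$ by the subword running from the front to an axis syllable exhibits the corresponding involution explicitly, and the counts in \thmref{th} confirm there are no extra shapes.
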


		\section{Asymptotic Growth for the Reciprocal Classes}\label{4}
		From the results of \cite{DG2}, it is easy to obtain the exact asymptotic growth of the number of reciprocal classes for small integers \( n \), but not for large \( n \). Therefore, for large \( n \), we use the Central Limit Theorem to compute the asymptotic growth with respect to the word length \( x \) and the number \( n \) of \( k_i \)'s, where the \( k_i \)'s are as defined in \lemref{lem}.
 From now onwards, we consider $\Phi$ as the Cumulative distribution function, in short, cdf, of the normal distribution $N(0,1)$. Thus, we have the following lemma:
		 
		 \begin{thm}[\tb{Central Limit Theorem}]
		 	Let \( X_1, X_2, \dots, X_n \) be i.i.d. random variables with mean \( \mu \) and variance \( \sigma^2 \). Then
		 	\[
		 	 \left( \frac{\sum_{i=1}^n X_i-n \mu}{\sigma\sqrt{n}} \right) \xrightarrow{d} {N}(0, 1).
		 	\]
		 \end{thm}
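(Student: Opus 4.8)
The plan is to prove this via characteristic functions, following the classical Lévy approach, since the hypotheses supply only a finite second moment. First I would reduce to the standardized case: setting $Y_i = (X_i - \mu)/\sigma$, the $Y_i$ are i.i.d.\ with $E[Y_i] = 0$ and $\mathrm{Var}(Y_i) = 1$, and the displayed quantity becomes $n^{-1/2}\sum_{i=1}^{n} Y_i$. Thus it suffices to show $n^{-1/2}\sum_{i=1}^{n} Y_i \xrightarrow{d} N(0,1)$.

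Let $\varphi(t) = E[e^{itY_1}]$ denote the common characteristic function. The key analytic input is the second-order expansion $\varphi(t) = 1 - t^2/2 + o(t^2)$ as $t \to 0$. This does \emph{not} follow from naively differentiating under the expectation sign (which would require a third moment); instead I would invoke the elementary remainder bound $\bigl| e^{ix} - (1 + ix - x^2/2) \bigr| \le \min\bigl(|x|^3/6,\, x^2\bigr)$, valid for all real $x$, apply it with $x = tY_1$, take expectations, and use dominated convergence (dominating by $Y_1^2$, which is integrable) to conclude that the error term is genuinely $o(t^2)$.

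By independence, the characteristic function of $S_n := n^{-1/2}\sum_{i=1}^{n} Y_i$ factors as $\varphi_{S_n}(t) = \varphi\bigl(t/\sqrt{n}\bigr)^{n}$. Substituting the expansion gives, for each fixed $t$, $\varphi\bigl(t/\sqrt{n}\bigr)^{n} = \bigl(1 - t^2/(2n) + o(1/n)\bigr)^{n}$, and the standard limit $(1 + z_n/n)^{n} \to e^{z}$ whenever $z_n \to z$ yields $\varphi_{S_n}(t) \to e^{-t^2/2}$ pointwise in $t$. Since $e^{-t^2/2}$ is precisely the characteristic function of $N(0,1)$, an appeal to Lévy's continuity theorem — pointwise convergence of characteristic functions to a limit continuous at $0$ forces convergence in distribution — completes the argument.

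The main obstacle is the second-order expansion of $\varphi$ under only a finite-variance hypothesis: one must avoid differentiating twice and instead control the Taylor remainder uniformly enough that dominated convergence applies. Everything downstream — the $n$-th power limit and the invocation of Lévy's theorem — is then routine. As an alternative, one could run Lindeberg's telescoping (variable-swapping) argument, comparing $\sum Y_i/\sqrt{n}$ term by term against a Gaussian sum, but the characteristic-function route is shorter and suffices for the applications in \thmref{main1} and \thmref{main2}.
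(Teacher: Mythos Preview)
Your proof is correct and follows the classical L\'evy characteristic-function route; the delicate point --- obtaining the second-order expansion $\varphi(t)=1-t^2/2+o(t^2)$ from only a finite second moment via the remainder bound $\min(|x|^3/6,\,x^2)$ and dominated convergence --- is handled properly.

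That said, the paper itself offers \emph{no} proof of this statement: the Central Limit Theorem is quoted there as a standard background result and invoked as a black box in the proofs of \lemref{even}, \lemref{4.3}, \thmref{main1}, and \lemref{4.6}. So there is nothing to compare against; your argument simply supplies what the paper takes for granted. If the intent is to make the paper self-contained, your sketch is an appropriate insertion; otherwise a citation to any standard probability text (Billingsley, Durrett, Feller) would serve the same purpose.
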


		 To get the asymptotic value, we use the Central Limit Theorem to get the following lemma: 		\begin{lemma}\label{even}
		 	Let us consider the equation:

		 	\begin{equation}
		 		\sum_{i=1}^{n} |~p_{i}~|=S_{n}, 
		 	\end{equation}
		 	where, $p_{i}$'s, and $x$ are positive integers, and each $p_{i}$  satisfies \( -r < p_i \leq r \). Let $\mathcal{N}_x$ be the number of the solution of the mentioned equation subject to the condition $S_n \leq x$. Then we have  \[ \mathcal{N}_{x}\sim \sum_{n=1}^{\lfloor \frac{x}{r}\rfloor}(2r-1)^{n}+{\sum_{n=\lceil \frac{x}{r}\rceil}^{x}(2r-1)^{n}\Phi\left(\frac{x-n\mu}{\sqrt{n}\sigma}\right)},  \]
		 	
		 	where, ${\mu=\frac{r^{2}}{2r-1}}, ~\sigma^{2}=\frac{r^4 - 2r^3 + 2r^2 - r}{3(2r - 1)^2}$ and $\Phi$ is the cumulative distribution function of the Normal distribution $N(0,1).$
		 \end{lemma}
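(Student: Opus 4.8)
The plan is to recast this lattice-point count probabilistically and then feed it into the Central Limit Theorem. I regard each coordinate $p_i$ as chosen uniformly at random from the $2r-1$ admissible nonzero values $\{-(r-1),\dots,-1,1,\dots,r\}$ (this is what reproduces the factor $(2r-1)^n$), and set $Y_i=|p_i|$, so that $Y_1,\dots,Y_n$ are i.i.d. with $\Pr[Y_i=r]=\frac{1}{2r-1}$ and $\Pr[Y_i=j]=\frac{2}{2r-1}$ for $1\le j\le r-1$. Computing $\mathbb{E}[Y_i]$ and $\mathbb{E}[Y_i^2]$ directly then gives $\mu=\frac{r^2}{2r-1}$ and $\sigma^2=\frac{r^4-2r^3+2r^2-r}{3(2r-1)^2}$, matching the stated constants. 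Writing $S_n=\sum_{i=1}^n Y_i$, the number of length-$n$ tuples obeying $S_n\le x$ is exactly $(2r-1)^n\Pr[S_n\le x]$; since $n\le S_n\le nr$ forces every contributing length to satisfy $n\le x$, I obtain the exact identity $\mathcal{N}_x=\sum_{n=1}^{x}(2r-1)^n\Pr[S_n\le x]$.

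Next I would split this sum at $n\approx x/r$. When $n\le\lfloor x/r\rfloor$ we have $S_n\le nr\le x$ with certainty, so $\Pr[S_n\le x]=1$ and these terms reproduce the first block $\sum_{n=1}^{\lfloor x/r\rfloor}(2r-1)^n$ verbatim. For $\lceil x/r\rceil\le n\le x$ I standardize and invoke the CLT stated above:
\[
\Pr[S_n\le x]=\Pr\!\left[\frac{S_n-n\mu}{\sigma\sqrt n}\le\frac{x-n\mu}{\sigma\sqrt n}\right]\approx\Phi\!\left(\frac{x-n\mu}{\sigma\sqrt n}\right),
\]
which replaces the remaining terms by $\sum_{n=\lceil x/r\rceil}^{x}(2r-1)^n\Phi\!\left(\frac{x-n\mu}{\sigma\sqrt n}\right)$ and assembles the claimed expression.

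The hard part is justifying that these termwise replacements survive summation in the strong sense ``$\sim$'' as $x\to\infty$, rather than merely ``$\simeq$''. The plain CLT is only a per-$n$ limit, so I would quantify it with a Berry--Esseen bound $\big|\Pr[S_n\le x]-\Phi(\tfrac{x-n\mu}{\sigma\sqrt n})\big|\le C/\sqrt n$, uniform in the threshold. The delicate point is that this uniform bound is far too crude near the top of the range: for $n$ close to $x$ both $\Pr[S_n\le x]$ and $\Phi$ are exponentially small (a lower-tail event, since $\mu>1$ forces $n\mu>x$ there), so a separate moderate/large-deviation estimate is needed to show those tail terms contribute negligibly to the error. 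I expect the sum to concentrate at a saddle point $n^{\ast}=\Theta(x)$ where the exponential factor $(2r-1)^n$ balances the Gaussian factor; there $\sqrt{n^{\ast}}=\Theta(\sqrt x)$, so the Berry--Esseen error is an $O(x^{-1/2})$ fraction of the dominant term and the ratio $\mathcal{N}_x/(\text{claimed sum})\to 1$. Making this saddle-point/tail analysis precise, and verifying that the exponential growth rate of the claimed sum genuinely matches that of $\mathcal{N}_x$, is where the real work lies.
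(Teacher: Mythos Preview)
Your proposal follows essentially the same route as the paper: the same probabilistic recasting with $Y_i=|p_i|$ uniform on $A=\{-(r-1),\dots,-1,1,\dots,r\}$, the same moment computation yielding $\mu$ and $\sigma^2$, the same split at $n\approx x/r$ (with the first block contributing exactly $\sum_{n\le\lfloor x/r\rfloor}(2r-1)^n$), and the same CLT replacement $\Pr[S_n\le x]\approx\Phi\bigl(\tfrac{x-n\mu}{\sqrt n\,\sigma}\bigr)$ in the second block. The paper does not invoke Berry--Esseen or moderate deviations; it runs a bare $\epsilon$--$K$ argument from the CLT with $\epsilon=\tfrac{1}{x}\Phi\bigl(\tfrac{x(1-\mu)}{\sqrt x\,\sigma}\bigr)$ and then uses the monotonicity of $n\mapsto\Phi\bigl(\tfrac{x-n\mu}{\sqrt n\,\sigma}\bigr)$ to bound the denominator of the ratio below by its value at $n=x$, so the error analysis you sketch (Berry--Esseen near the saddle, large-deviation control in the tail) is in fact more careful than what the paper itself supplies.
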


		 \begin{proof}
		 	We will solve this counting problem using the Central Limit Theorem. First, we define the set $A:=\{-r+1,\dots,-1,1,\dots,r\}$.

		 	Consider the inequality,
		 	\begin{equation} \label{eq:main}
		 		\sum_{i=1}^{n} |p_i| \leq x,
		 	\end{equation}
		 	where each \( p_i \in \mathbb{Z} \setminus \{0\} \), and \( -r < p_i \leq r \), i.e., \( p_i \in A \). Then, we first choose, $\lceil\frac{x}{r}\rceil \leq n\leq x. $
		 	
		 	Let \( X_i = |p_i| \in \{1, 2, \dots, r\} \), so the problem becomes
		 	\[
		 	\sum_{i=1}^n X_i \leq x.
		 	\]
		 	
		 	Now consider \( X_i \) as i.i.d. random variables uniformly distributed on \(A \). This implies we have two choices for $|p_i|$'s in $\{1,\dots,r-1\}$ and one choice for $|p_i|=r$, i.e. $\mathbb{P}(X_{i}=k)=\frac{2}{2r-1}$ for $k\in \{1,\dots,r-1\}$ and $\mathbb{P}(X_{i}=r)=\frac{1}{2r-1}$. Then
		 	\[
		 	\mathbb{E}[X_i] = \mu = \sum_{k=1}^{r-1} \frac{2}{2r-1}\cdot k +\frac{1}{2r-1}\cdot r
		 	\]
		 	\[=\frac{r(r-1)}{2r-1}+\frac{r}{2r-1}.\]
		 Therefore,	\[{\mu=\frac{r^{2}}{2r-1}}\]

		 	\[ \qquad \mathrm{Var}(X_i) = \sigma^2 = \mathbb{E}[X^{2}]-\mu^{2}=\sum_{k=1}^{r-1} \frac{2}{2r-1}\cdot k^{2} +\frac{1}{2r-1}\cdot r^{2}-\mu^{2}\]
		 	
		 	\[
		 	= \frac{2}{2r - 1} \cdot \frac{(r-1)r(2r - 1)}{6} + \frac{1}{2r - 1} r^2 -\mu^{2}= \frac{2(r - 1)r}{6} + \frac{r^2}{2r - 1}-\left( \frac{r^2}{2r - 1} \right)^2.
		 	\]
		 	\[
		 	\sigma^{2}= \frac{(2r - 1)^2 \cdot (r - 1)r + 3(2r - 1)r^2 - 3r^4}{3(2r - 1)^2}.
		 	\]
		 	Therefore,
		 	\[
		 	{
		 		\sigma^{2}=\frac{r^4 - 2r^3 + 2r^2 - r}{3(2r - 1)^2}.
		 	}
		 	\]

		 	By the Central Limit Theorem, the sum \( S_n = \sum X_i \) is approximately normally distributed. That means,
		 	\[
		 	\frac{S_n -n\mu}{\sqrt{n}\sigma}\approx {N}(0,1).
		 	\]
		 	So we have \[\lim_{n\rightarrow\infty}\mathbb{P}\left(\frac{S_n -n\mu}{\sqrt{n}\sigma}<~z\right)=\Phi(z)\]
		 	where $\Phi(z)$ is the cdf for the Normal distribution  $N(0,1).$ 
		 	
		 	Therefore, for $\epsilon=\frac{1}{x}{\Phi\left(\frac{x-x\mu}{\sqrt{x}\sigma}\right)}>0$ there is a natural number $K$ such that,
		 	
		 	\[\left|\mathbb{P}\left(\frac{S_n -n\mu}{\sqrt{n}\sigma}<~z\right)-\Phi(z)\right|<\frac{\epsilon}{2}\qquad \text{for all $n\geq K$}.\]
		 	
		 	We choose a sufficiently large $x$ such that $n\geq K$. We also choose $z=\frac{x-n\mu}{\sqrt{n}\sigma}$, then 
		 	\[
		 	\left|~\mathbb{P}\left(\frac{S_n -n\mu}{\sqrt{n}\sigma}\leq ~\frac{x-n\mu}{\sqrt{n}\sigma}\right)-\Phi\left(\frac{x-n\mu}{\sqrt{n}\sigma}\right )\right |<\frac{\epsilon}{2}
		 	\]
		 	\[
		 	\imp \left|\mathbb{P}(S_n \leq ~x)-\Phi\left(\frac{x-n\mu}{\sqrt{n}\sigma}\right)\right|<\frac{\epsilon}{2}. \]

		 	The total number of choices of an $n$-tuple $(X_i,)_{i=1}^{n}$ is $(2r-1)^{n}.$ Then,

	\[\left|\mathcal{N}_{n}(S_n \leq ~x)-(2r-1)^{n}\Phi\left(\frac{x-n\mu}{\sqrt{n}\sigma}\right)\right|<(2r-1)^{n}\frac{\epsilon}{2}\]
	\[\imp -\sum_{n=\lceil \frac{x}{r}\rceil}^{x}(2r-1)^{n}\frac{\epsilon}{2}<  \sum_{n=\lceil \frac{x}{r}\rceil}^{x}\left({\mathcal{N}_{n}(S_n \leq ~x)}-(2r-1)^{n}\Phi\left(\frac{x-n\mu}{\sqrt{n}\sigma}\right)\right)< \sum_{n=\lceil \frac{x}{r}\rceil}^{x}(2r-1)^{n}\frac{\epsilon}{2}.\]
	
	That implies
	\[
	- \frac{\sum_{n=\lceil \frac{x}{r}\rceil}^{x}(2r-1)^{n}\frac{\epsilon}{2}}{\sum_{n=\lceil \frac{x}{r}\rceil}^{x}(2r-1)^{n}\Phi(\frac{x-n\mu}{\sqrt{n}\sigma})}< \left( \frac{\sum_{n=\lceil \frac{x}{r}\rceil}^{x}\mathcal{N}_{n}(S_n \leq ~x)}{\sum_{n=\lceil \frac{x}{r}\rceil}^{x}(2r-1)^{n}\Phi(\frac{x-n\mu}{\sqrt{n}\sigma})}-1\right)< \frac{\sum_{n=\lceil \frac{x}{r}\rceil}^{x}(2r-1)^{n}\frac{\epsilon}{2}}{\sum_{n=\lceil \frac{x}{r}\rceil}^{x}(2r-1)^{n}\Phi(\frac{x-n\mu}{\sqrt{n}\sigma})}
	\]
	
	Since $\frac{x-n\mu}{\sqrt{n}\sigma}$ is a strictly decreasing function with respect to $n$ and $\Phi(z)$ is increasing, then $\Phi\left(\frac{x-x\mu}{\sqrt{x}\sigma}\right)$ is the smallest one among all other $\Phi(\frac{x-n\mu}{\sqrt{n}\sigma})$'s. Therefore,

	\[
	- \frac{\sum_{n=\lceil \frac{x}{r}\rceil}^{x}(2r-1)^{n}\frac{\epsilon}{2}}{\sum_{n=\lceil \frac{x}{r}\rceil}^{x}(2r-1)^{n}\Phi(\frac{x-x\mu}{\sqrt{x}\sigma})}< \left( \frac{\sum_{n=\lceil \frac{x}{r}\rceil}^{x}\mathcal{N}_{n}(S_n \leq ~x)}{\sum_{n=\lceil \frac{x}{r}\rceil}^{x}(2r-1)^{n}\Phi(\frac{x-n\mu}{\sqrt{n}\sigma})}-1\right)< \frac{\sum_{n=\lceil \frac{x}{r}\rceil}^{x}(2r-1)^{n}\frac{\epsilon}{2}}{\sum_{n=\lceil \frac{x}{r}\rceil}^{x}(2r-1)^{n}\Phi(\frac{x-x\mu}{\sqrt{x}\sigma})}
	\]
	
	\[
	- \frac{\frac{\epsilon}{2}}{\Phi(\frac{x-x\mu}{\sqrt{x}\sigma})}< \left( \frac{\sum_{n=\lceil \frac{x}{r}\rceil}^{x}\mathcal{N}_{n}(S_n \leq ~x)}{\sum_{n=\lceil \frac{x}{r}\rceil}^{x}(2r-1)^{n}\Phi(\frac{x-n\mu}{\sqrt{n}\sigma})}-1\right)<  \frac{\frac{\epsilon}{2}}{\Phi(\frac{x-x\mu}{\sqrt{x}\sigma})}
	\]
	Then we have,
	\[
	- \frac{1}{2x}< \left( \frac{\sum_{n=\lceil \frac{x}{r}\rceil}^{x}\mathcal{N}_{n}(S_n \leq ~x)}{\sum_{n=\lceil \frac{x}{r}\rceil}^{x}(2r-1)^{n}\Phi(\frac{x-n\mu}{\sqrt{n}\sigma})}-1\right)<  \frac{1}{2x}.
	\] 
	Now, after taking $x\rightarrow\infty$, we obtain that

	\[
	\lim_{x\rightarrow\infty}	\left|  \frac{\sum_{n=\lceil \frac{x}{r}\rceil}^{x}\mathcal{N}_{n}(S_n \leq ~x)}{\sum_{n=\lceil \frac{x}{r}\rceil}^{x}(2r-1)^{n}\Phi(\frac{x-n\mu}{\sqrt{n}\sigma})}-1\right|\rightarrow 0.
	\] 
	So, the number of the solutions for $\sum_{i=1}^{n}|p_{i}|\leq x$ with $\lceil \frac{x}{r}\rceil\leq n\leq x$ is,
	\[ {\sum_{n=\lceil \frac{x}{r}\rceil}^{x}\mathcal{N}_{n}(S_n \leq ~x)}\sim{\sum_{n=\lceil \frac{x}{r}\rceil}^{x}(2r-1)^{n}\Phi\left(\frac{x-n\mu}{\sqrt{n}\sigma}\right)}. \]
	
Now, if $n\leq \frac{x}{r}$ then any $n-$tuple of $X_{i}$'s will be part of the solutions of the Equation \eqnref{eq:main}. Then,

 \[{\sum_{n=1}^{\lfloor \frac{x}{r}\rfloor}\mathcal{N}_{n}(S_n \leq ~x)}=\sum_{n=1}^{\lfloor \frac{x}{r}\rfloor}(2r-1)^{n}.\]
 
 Therefore, we have,
 The approximation of the total number of solutions,
 \[\sum_{n=1}^{\lfloor \frac{x}{r}\rfloor}(2r-1)^{n}+{\sum_{n=\lceil \frac{x}{r}\rceil}^{x}(2r-1)^{n}\Phi\left(\frac{x-n\mu}{\sqrt{n}\sigma}\right)} .\]
  \end{proof}

	 Now, we will explore another similar lemma where, $p_{i}$ varies from $-r$ to $r$.
	 \begin{lemma}\label{4.3}
 Consider the following equation: 	
	 	\begin{equation}
	 		\sum_{i=1}^{n} |~p_{i}~|=S_{n}, 
	 	\end{equation}
	 	where, $p_{i}$, and $x$ are positive integers, and  \( -r \leq p_i \leq r \). Let $\mathcal{N}(S_{n}\leq x) $ denote the number of solutions of the above equation subject to the condition $S_n \leq x$.  Then,  \[ \mathcal{N}(S_{n}\leq x)\sim \sum_{n=1}^{\lfloor \frac{x}{r}\rfloor}(2r)^{n}+{\sum_{n=\lceil \frac{x}{r}\rceil}^{x}(2r)^{n}\Phi\left(\frac{x-n\mu}{\sqrt{n}\sigma}\right)} .\]
	 	
	 	where, ${\mu=\frac{r+1}{2}}, ~\sigma^{2}=\frac{r^2}{12}$ and $\Phi$ is the cumulative distribution function of the standard Normal distribution $N(0,1).$
	 \end{lemma}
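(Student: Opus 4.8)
The plan is to follow the template of \lemref{even}, adapting each step to the enlarged symbol set in which $p_i$ ranges over $A := \{-r, \dots, -1, 1, \dots, r\}$. This set has $2r$ elements, so an ordered $n$-tuple $(p_1, \dots, p_n)$ admits $(2r)^n$ choices. Writing $X_i = |p_i| \in \{1, \dots, r\}$, the crucial structural point is that each value $k \in \{1,\dots,r\}$ arises from exactly the two sign choices $p_i = \pm k$; hence the $X_i$ are i.i.d.\ and \emph{uniform} on $\{1,\dots,r\}$, with $\mathbb{P}(X_i = k) = \tfrac{2}{2r} = \tfrac{1}{r}$ for every $k$. This is the main simplification relative to \lemref{even}, where the extreme value $r$ carried only half the weight of the interior values and the resulting distribution was not uniform.

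With the distribution identified, the moments are the standard ones for the discrete uniform law: $\mu = \mathbb{E}[X_i] = \tfrac{1}{r}\sum_{k=1}^{r} k = \tfrac{r+1}{2}$, and $\sigma^2 = \mathbb{E}[X_i^2] - \mu^2 = \tfrac{(r+1)(2r+1)}{6} - \tfrac{(r+1)^2}{4} = \tfrac{r^2-1}{12}$, the variance of the discrete uniform law on $\{1,\dots,r\}$. I would then split the count at the threshold $x/r$. When $n \leq \lfloor x/r \rfloor$, the maximal possible sum is $S_n \leq nr \leq x$, so \emph{every} one of the $(2r)^n$ tuples already satisfies $S_n \leq x$; this produces the first sum $\sum_{n=1}^{\lfloor x/r\rfloor}(2r)^n$ with no error.

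For the complementary range $\lceil x/r \rceil \leq n \leq x$ I would invoke the Central Limit Theorem exactly as in the previous lemma: $\tfrac{S_n - n\mu}{\sqrt{n}\sigma} \xrightarrow{d} N(0,1)$ gives $\mathbb{P}(S_n \leq x) \approx \Phi\!\left(\tfrac{x-n\mu}{\sqrt{n}\sigma}\right)$, and scaling by the total number $(2r)^n$ of $n$-tuples yields $\mathcal{N}_n(S_n \leq x) \approx (2r)^n \Phi\!\left(\tfrac{x-n\mu}{\sqrt{n}\sigma}\right)$. To convert this pointwise estimate into a statement about the aggregate sum, I would fix $\epsilon = \tfrac{1}{x}\Phi\!\left(\tfrac{x-x\mu}{\sqrt{x}\sigma}\right)$, use the CLT to bound the per-$n$ error by $\tfrac{\epsilon}{2}(2r)^n$ uniformly for $n \geq K$, and then exploit that $\tfrac{x-n\mu}{\sqrt{n}\sigma}$ is strictly decreasing in $n$, so that $\Phi\!\left(\tfrac{x-x\mu}{\sqrt{x}\sigma}\right)$ is the smallest $\Phi$-value occurring. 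Dividing the error bound through by this minimal value collapses the relative error to $\tfrac{1}{2x}$, which tends to $0$ as $x \to \infty$; combining the two regimes then gives the claimed asymptotic for $\mathcal{N}(S_n \leq x)$.

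The main obstacle, inherited from \lemref{even}, is justifying the summation of a CLT approximation over a window of $n$ whose length grows with $x$. The Central Limit Theorem is only an asymptotic statement at each fixed $n$, so the delicate part is choosing $\epsilon$ to depend on $x$ in a manner that keeps the \emph{relative} error under control simultaneously across all $n \in [\lceil x/r\rceil, x]$. The monotonicity of the argument $\tfrac{x-n\mu}{\sqrt{n}\sigma}$ is precisely what makes this uniform control possible, and once it is granted the remainder of the argument is the same bookkeeping as before.
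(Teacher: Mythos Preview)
Your proposal is correct and follows essentially the same approach as the paper's own proof: identify the uniform law on $\{1,\dots,r\}$ via the symmetric sign choices, compute $\mu=\tfrac{r+1}{2}$ and $\sigma^{2}=\tfrac{r^{2}-1}{12}$, split the count at the threshold $x/r$, and for the remaining range defer to the CLT bookkeeping of \lemref{even}. Your computed variance $\tfrac{r^{2}-1}{12}$ agrees with the paper's proof (the value $\tfrac{r^{2}}{12}$ in the lemma's statement is a typo).
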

	 \begin{proof} 	
	 	Let $B:=\{-r,-r+1,\dots,-1,1,\dots,r\}$. 	Consider the inequality,
	 	\begin{equation} \label{eq:main2}
	 		\sum_{i=1}^{n} |p_i| \leq x,
	 	\end{equation}
	 	where each \( p_i \in \mathbb{Z} \setminus \{0\} \), and \( -r \leq p_i \leq r \), i.e., \( p_i \in B \). Suppose $\lceil\frac{x}{r}\rceil \leq n\leq x. $
	 	
	 	Let \( X_i = |p_i| \in \{1, 2, \dots, r\} \). From \eqnref{eq:main2}, 
	 	\[
	 	\sum_{i=1}^n X_i \leq x.
	 	\]
	 	
	 	Now consider \( X_i \) as independent and identically distributed random variables uniformly distributed on \(B \). This implies,  $\mathbb{P}(X_{i}=k)=\frac{2}{2r}=\frac{1}{r}$ for $k\in \{1,\dots,r\}$. Then
	 	\[
	 	\mathbb{E}[X_i] = \mu = \sum_{k=1}^{r} \frac{1}{r}\cdot k 
	 	\]
	 
	 	Therefore,	\[{\mu=\frac{r+1}{2}}.\]

	 	Also,
	 	
	 	\[ \qquad \mathrm{Var}(X_i) = \sigma^2 = \mathbb{E}[X^{2}]-\mu^{2}=\sum_{k=1}^{r} \frac{1}{r}\cdot k^{2} -\mu^{2}\]
	 	
	 	\[
	 	= \frac{1}{r} \cdot \frac{(r+1)r(2r + 1)}{6}  -\mu^{2}= \frac{(2r+ 1)(r+1)}{6} -\left( \frac{r+1}{2} \right)^2.
	 	\]

    It follows 	 	\[
	 	{
	 		\sigma^{2}=\frac{r^{2}-1}{12}.
	 	}
	 	\]

	 The total number of choices of the $n$-tuples  $(X_i)_{i=1}^{n}$ is $(2r)^{n}.$ Then
	 	by continuing the same process as the previous lemma, we obtain that, 
	 	 	the number of the solutions for $\sum_{i=1}^{n}|p_{i}|\leq x$ with $\lceil \frac{x}{r}\rceil\leq n\leq x$ is,
	 	\[ {\sum_{n=\lceil \frac{x}{r}\rceil}^{x}\mathcal{N}_{n}(S_n \leq ~x)}\sim{\sum_{n=\lceil \frac{x}{r}\rceil}^{x}(2r)^{n}\Phi\left(\frac{x-n\mu}{\sqrt{n}\sigma}\right)}. \]
	 	
	 	Now, if $n\leq \frac{x}{r}$ then any $n-$tuple of $X_{i}$'s will be part of the solutions of the Equation \eqnref{eq:main2}. Then,
	 	
	 	\[{\sum_{n=1}^{\lfloor \frac{x}{r}\rfloor}\mathcal{N}_{n}(S_n \leq ~x)}=\sum_{n=1}^{\lfloor \frac{x}{r}\rfloor}(2r)^{n}.\]
	 	
	 	Therefore, we have,
	 	The approximation of the total number of solutions,
	 	\[\sum_{n=1}^{\lfloor \frac{x}{r}\rfloor}(2r)^{n}+{\sum_{n=\lceil \frac{x}{r}\rceil}^{x}(2r)^{n}\Phi\left(\frac{x-n\mu}{\sqrt{n}\sigma}\right)} .\]
        This proves the lemma. 
	 \end{proof}
	 
		Let $p$ be an odd natural number for the Hecke group $\Gamma_{p}$. Then the reduced reciprocal word $\alpha$ in that group has only one type of form $ \io\g^{k_{1}}\io \g^{k_{2}}\dots \io \g^{k_{n}}\io \g^{-k_{n}}\dots\io \g^{-k_{2}}\io \g^{-k_{1}} $ (see \lemref{lem}). Now, we have :
	
		\subsection{Proof of the \thmref{main1}}
		Let $\alpha$ be the cyclically reduced reciprocal word, and it is of the form $ \io\g^{k_{1}}\io \g^{k_{2}}\dots \io \g^{k_{n}}\io \g^{-k_{n}}\dots\io \g^{-k_{2}}\io \g^{-k_{1}} .$
			From the previous section, we know that \[\sum_{i=1}^{n}2\left(\left|k_{i}\right|+1\right)=S_{n}.\]
			We consider every $X_{i}=2\left(\left|k_{i}\right|+1\right)$ as independent and identically distributed on $B=\{\pm r,\dots, \pm2,\pm 1\}.$ To find the asymptotic value of the growth of the number of reciprocal geodesics, it is suffices to find the growth of the number of the solutions of the following inequality,
            \[ S_n\leq x\]
            where $n$ is not fixed.
		Now, the main problem becomes $$S_{n}=\sum_{i=1}^{n}X_{i}\leq x.$$ The process of this lemma is almost similar to \lemref{4.3}, but here the random variables are different, and we should choose $n$ from $\lceil\frac{x}{2(r+1)}\rceil$ two $\lfloor\frac{x}{4}\rfloor$ , unlike the previous lemma.  We need to take the mean and the variance accordingly. 
		Therefore, the mean, i.e.	\[
		\mathbb{E}[X_i] = \mu = \sum_{i=1}^{n}2(\left|k_{i}\right|+1)\mathbb{P}(X_{i}=2\left|p_{i}\right|+2)=\sum_{k=1}^{r} \frac{1}{r}\cdot (2k +2)=\frac{r(r+1)+2r}{r}=r+3.
		\]
			Now, the variance, i.e.
			
			\begin{align*}
				\mathrm{Var}(X_i) = \sigma^2 = \mathbb{E}[X^{2}]-\mu^{2}
				&= \frac{1}{r}\sum_{k=1}^r (2k+2)^2 - (r+3)^2 \\[6pt]
				&= \frac{1}{r}\Biggl(4\sum_{k=1}^r k^2 + 8\sum_{k=1}^r k + 4r\Biggr) - (r+3)^2 \\[6pt]
				&= \frac{1}{r}\Biggl(4\frac{r(r+1)(2r+1)}{6} + 8\frac{r(r+1)}{2} + 4r\Biggr) - (r^2+6r+9) \\[6pt]
				&= \frac{4r^3 + 18r^2 + 26r}{3r} - (r^2+6r+9) \\[6pt]
				&= \frac{4r^2 + 18r + 26}{3} - (r^2+6r+9) \\[6pt]
				&= \frac{4r^2 + 18r + 26 - 3r^2 - 18r - 27}{3} \\[6pt]
				&= \frac{r^2 - 1}{3}.
			\end{align*}
			 Then, the number of the reciprocal classes in Hecke groups with at most length $x$ is asymptotic to
			 	\[\frac{1}{2}\sum_{n=1}^{\lfloor \frac{x}{2(r+1)}\rfloor}(2r)^{n}+\frac{1}{2}{\sum_{n=\lceil \frac{x}{2(r+1)}\rceil}^{\lfloor \frac{x}{4}\rfloor}(2r)^{n}\Phi\left(\frac{x-n\mu}{\sqrt{n}\sigma}\right)} .\] Since we have exactly two elements in that form for every reciprocal class, we have multiplied $\frac{1}{2}$.			 \qed
	
    \medskip
		In case of even $p$, we have different types of reciprocal classes. In the following theorem, we describe the counting problem on symmetric reciprocal classes.
	
	\begin{lemma}\label{4.6}
	For even $p=2r $, let $Sym_{x}$ be the set of symmetric reciprocal classes in the Hecke group $\Gamma_{p}$, those have at most $x$ word lengths. Then,
	\[\left|Sym_{x}\right|\sim 	\frac{1}{2}\sum_{n=1}^{\lfloor \frac{x}{2(r+1)}\rfloor}(2r-1)^{n}+\frac{1}{2}{\sum_{n=\lceil \frac{x}{2(r+1)}\rceil}^{\lfloor\frac{x}{4}\rfloor}(2r-1)^{n}\Phi\left(\frac{x-n\mu}{\sqrt{n}\sigma}\right)} \] where, $\mu=\frac{2r^2 + 4r - 2}{2r - 1}$ and $\sigma^{2}=\frac{16r^3 + 36r^2 + 32r - 12}{6(2r - 1)}.$
\end{lemma}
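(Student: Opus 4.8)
The plan is to replicate, almost verbatim, the argument of \lemref{even}, changing only the summand random variable so that it records the word length of the palindromic normal form. By \lemref{lem}(1) a symmetric reciprocal class has a cyclically reduced representative of the shape $\io\g^{k_{1}}\io\g^{k_{2}}\cdots\io\g^{k_{n}}\io\g^{-k_{n}}\cdots\io\g^{-k_{1}}$, and by \thmref{th} each such class contains exactly four symmetric elements, of which exactly two are cyclically reduced words of this form; this is the source of the overall factor $\tfrac12$, exactly as in the proof of \thmref{main1}. By \defref{word} the word length of this representative is $S_{n}=\sum_{i=1}^{n}2(|k_{i}|+1)$. For even $p=2r$ the admissible nonzero exponents of $\g$ are the $2r-1$ reduced residues $\{-(r-1),\dots,-1,1,\dots,r-1,r\}$, where the extreme value $r$ is special because $\g^{r}=\g^{-r}=\gt$; hence $|k_{i}|$ takes each value in $\{1,\dots,r-1\}$ with multiplicity two and the value $r$ with multiplicity one, which is precisely the distribution underlying \lemref{even}.

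Next I would recast $|Sym_{x}|$ as a lattice-point count: up to the factor $\tfrac12$, it equals the number of tuples $(k_{1},\dots,k_{n})$, with $n$ free, satisfying $S_{n}\leq x$. Setting $X_{i}=2(|k_{i}|+1)$ and treating the $X_{i}$ as i.i.d.\ with the multiplicity distribution above, I would evaluate $\mu=\mathbb{E}[X_{i}]$ and $\sigma^{2}=\mathrm{Var}(X_{i})$ by the same second-moment bookkeeping used in \lemref{even} and in the proof of \thmref{main1}; these are purely mechanical computations. The ranges of $n$ are forced by the support of $X_{i}$: since $4\leq X_{i}\leq 2(r+1)$, the total $S_{n}$ lies between $4n$ and $2(r+1)n$, so for $n\leq\lfloor x/2(r+1)\rfloor$ \emph{every} one of the $(2r-1)^{n}$ tuples is admissible (producing the first sum), while for $n>\lfloor x/4\rfloor$ no tuple is admissible. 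Thus the interesting regime is exactly $\lceil x/2(r+1)\rceil\leq n\leq\lfloor x/4\rfloor$.

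On this intermediate window I would invoke the Central Limit Theorem as in \lemref{even}: for each $n$ in the range $\mathcal{N}_{n}(S_{n}\leq x)\approx (2r-1)^{n}\,\Phi\!\left(\frac{x-n\mu}{\sqrt{n}\,\sigma}\right)$, and then, choosing $\epsilon=\tfrac{1}{x}\Phi\!\left(\frac{x-x\mu}{\sqrt{x}\,\sigma}\right)$ and using that $n\mapsto\Phi\!\left(\frac{x-n\mu}{\sqrt{n}\,\sigma}\right)$ is decreasing so the smallest term controls the accumulated error, the squeeze forces the ratio of $\sum_{n}\mathcal{N}_{n}(S_{n}\leq x)$ to $\sum_{n}(2r-1)^{n}\Phi\!\left(\frac{x-n\mu}{\sqrt{n}\,\sigma}\right)$ to tend to $1$. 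Adding back the small-$n$ contribution $\sum_{n=1}^{\lfloor x/2(r+1)\rfloor}(2r-1)^{n}$ and multiplying by $\tfrac12$ then yields the stated asymptotic for $|Sym_{x}|$.

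The main obstacle is the uniformity of the normal approximation across the entire summation window rather than at a single, fixed $n$. The textbook Central Limit Theorem only gives convergence for a fixed distribution as $n\to\infty$, whereas here $x\to\infty$ with $n$ ranging up to $\lfloor x/4\rfloor$ and the threshold $\frac{x-n\mu}{\sqrt{n}\,\sigma}$ sweeping from large positive values down toward $0$. Making the squeeze genuinely rigorous requires a Berry--Esseen-type error bound that is uniform in $n$, and one must verify that the exponentially growing weights $(2r-1)^{n}$ do not amplify the accumulated error beyond $o(1)$ of the main term; this is the delicate point. By contrast, the evaluations of $\mu$ and $\sigma^{2}$ from the explicit multiplicity distribution are routine and carry no conceptual difficulty.
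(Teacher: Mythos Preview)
Your proposal is correct and follows essentially the same route as the paper: identify the palindromic normal form from \lemref{lem}(1), set $X_{i}=2(|k_{i}|+1)$ with the asymmetric multiplicity distribution on $\{1,\dots,r\}$ coming from the exponent set $\{-(r-1),\dots,-1,1,\dots,r\}$, compute $\mu$ and $\sigma^{2}$, and then defer the analytic squeeze entirely to the template argument of \lemref{even}, with the factor $\tfrac12$ justified exactly as you say. Your remark about the uniformity of the CLT across the whole window is a fair critique, but it applies equally to the paper's own \lemref{even}, which your argument simply quotes; the paper does not address this point beyond the $\epsilon$-squeeze you describe.
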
			
		
		\begin{proof}
			In the case of even $p$, the reciprocal reduced words are also of the form of $\alpha$ i.e., $ \io\g^{k_{1}}\io \g^{k_{2}}\dots \io \g^{k_{n}}\io \g^{-k_{n}}\dots\io \g^{-k_{2}}\io \g^{-k_{1}} $ with all $k_{i}\in \{-r+1,-r+2,\dots,-1,1,\dots,r\}.$ 
			Let $X_{i}=2\left(\left|k_{i}\right|+1\right)$  for some $i$, be an independent and identically distributed random variable uniformly distributed on $\{r,\pm (r-1),\dots, \pm2,\pm 1\}$.  Now we have the mean, i.e.,
		
			\begin{align*}
				\mathbb{E}[X_i] = \mu = \sum_{i=1}^{n}2(\left|k_{i}\right|+1)\mathbb{P}(X_{i}=2\left|k_{i}\right|+2)
			   &=	\sum_{k=1}^{r-1} \frac{2}{2r-1}(2k + 2) + \frac{2r + 2}{2r - 1}\\[6pt]
				&= \frac{2}{2r - 1} \sum_{k=1}^{r-1}(2k + 2) + \frac{2r + 2}{2r - 1} \\[6pt]
			&= \frac{2}{2r - 1} \left(2\sum_{k=1}^{r-1}k + 2(r - 1)\right) + \frac{2(r + 1)}{2r - 1} \\[6pt]
				&= \frac{2}{2r - 1} \left(2 \cdot \frac{(r-1)r}{2} + 2(r - 1)\right) + \frac{2(r + 1)}{2r - 1} \\[6pt]
				&= \frac{2}{2r - 1} \left(r(r - 1) + 2(r - 1)\right) + \frac{2(r + 1)}{2r - 1} \\[6pt]
				&= \frac{2}{2r - 1} \left((r + 2)(r - 1)\right) + \frac{2(r + 1)}{2r - 1} \\[6pt]
				&= \frac{2(r + 2)(r - 1) + 2(r + 1)}{2r - 1} \\[6pt]
				&= \frac{2(r^2 + r - 2) + 2r + 2}{2r - 1} \\[6pt]
				&= \frac{2r^2 + 2r - 4 + 2r + 2}{2r - 1} \\[6pt]
				&= \frac{2r^2 + 4r - 2}{2r - 1}.
 			\end{align*}
			And the variance, i.e.,
			\begin{align*}
					\mathrm{Var}(X_i) = \sigma^2 = \mathbb{E}[X^{2}]-\mu^{2}
				&= \frac{2}{2r-1}\sum_{k=1}^{r-1} (2k+2)^2+\frac{(2r + 2)^2}{2r - 1} - \frac{2r^2 + 4r - 2}{2r - 1} \\[6pt]
	           &= \frac{1}{2r - 1} \left[ 2\sum_{k=1}^{r-1}(2k + 2)^2 + (2r + 2)^2 - (2r^2 + 4r - 2) \right] \\[6pt]
				&= \frac{1}{2r - 1} \left[ 2\cdot 4\sum_{k=1}^{r-1}(k^2 + 2k + 1) + 4r^2 + 8r + 4 - 2r^2 - 4r + 2 \right] \\[6pt]
				&= \frac{1}{2r - 1} \left[ 8 \left( \sum_{k=1}^{r-1}k^2 + 2\sum_{k=1}^{r-1}k + (r - 1) \right) + 2r^2 + 4r + 6 \right] \\[6pt]
                \end{align*}
              That implies,  
                 \[ \sigma^2 = \frac{1}{2r - 1} \left[ 8\left( \frac{(r - 1)r(2r - 1)}{6} + (r - 1)r + (r - 1) \right) + 2r^2 + 4r + 6 \right] \]
				\[= \frac{1}{2r - 1} \left[ \frac{8(r - 1)(2r^2 + 5r + 6)}{6} + 2r^2 + 4r + 6 \right]\] 
				\[= \frac{1}{2r - 1} \left[ \frac{16r^3 + 24r^2 + 8r - 48}{6} + 2r^2 + 4r + 6 \right]\] 
				\[= {\frac{16r^3 + 36r^2 + 32r - 12}{6(2r - 1)}}.\]
			
			Continuing as \lemref{even} and the previous theorem, we have the asymptotic value: 
				\[\left|Sym_{x}\right|\sim 	\frac{1}{2}\sum_{n=1}^{\lfloor \frac{x}{2(r+1)}\rfloor}(2r-1)^{n}+\frac{1}{2}{\sum_{n=\lceil \frac{x}{2(r+1)}\rceil}^{\lfloor\frac{x}{4}\rfloor}(2r-1)^{n}\Phi\left(\frac{x-n\mu}{\sqrt{n}\sigma}\right)}. \]

				$ \frac{1}{2}$ appears because of having two reduced form of $\alpha$ in that reciprocal class.
		\end{proof}
		
		\medspace
		
		 We can similarly calculate the number of $ p$-reciprocal classes as well as symmetric $p $-reciprocal classes. The power series form of $|Sym_{x}|$  ends at $\lfloor\frac{x}{4}\rfloor$th term. The power series ends at $\lfloor\frac{x-q}{4}\rfloor$ where, $q=2r+2,~r+1$ respectively for $p$-reciprocal classes and symmetric $p$-reciprocal classes. So, the growth of asymptotic functions in that case does not surpass  the growth of the number of the symmetric reciprocal classes. Let $Rec_{x}$, $SR_{x}$ be the sets of $p$-reciprocal and symmetric $p$-reciprocal classes respectively, in the Hecke group $\Gamma_{p}$, which have at most $x$ word lengths. Also, we assume that the set of all reciprocal classes in 
        $\Gamma_p$ with word length up to $x$ is $T_{x}$.
		 Then, any large $x,$ \[|Sym_{x}|\leq |T_{x}|\leq |Sym_{x}|+|Rec_{x}|+|SR_{x}|.\]
		 We know that there are other types of reciprocal classes except symmetric reciprocal classes. That justifies the first relation. And the second relation is also satisfied since there are some reciprocal classes that are in common. For example, $U=\io\gt\g^{k_{1}}\dots \io \g^{k_{n}}\io \g^{-k_{n}}\dots\io \g^{-k_{1}} \io\gt\g^{k_{1}}\dots \io \g^{k_{n}}\io \g^{-k_{n}}\dots\io \g^{-k_{1}}$, is symmetric reciprocal as well as $p$-reciprocal and it is a power of symmetric $p $-reciprocal element. Thus, we have,
		 \[|Sym_x|\leq |{T_{x}}|\precsim |Sym_x|.\]
		 As the highest powers of $(2r-1)$ in $Rec_{x},~SR_{x}$ are less than $\lfloor\frac{x}{4}\rfloor$, then we have 
          \[ |T_x|\simeq |Sym_x|\]
	
		\subsection{Proof of \thmref{main2}}

			The theorem follows from the \lemref{4.6}, and from the fact $ |{T_{x}}|\simeq{|Sym_{x}|}.$ \qed

            Now, we will discuss about the asymptotic growth of the number of primitive and non-primitive reciprocal classes. Before that, we have the following lemma:
	            \begin{lemma}\label{7.1}
	            	Let $W_{x}$ and $W^{np}_x$ be the collection of reciprocal classes and non-primitive reciprocal classes in $\Gamma_{p}$, those have word length at most $x$ , respectively. Then
	            	 \[\left|W^{np}_{x}\right|\leq 2x(x+1)\left|W_{\lfloor\frac{x}{2}\rfloor}\right|.\]
	            	
	            \end{lemma}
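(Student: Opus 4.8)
\emph{Strategy.} The plan is to build an injective \emph{root map} from the non-primitive reciprocal classes of length at most $x$ into a set of pairs (reciprocal class of length at most $\lfloor x/2\rfloor$, exponent), and then count the pairs. Two structural features of the free product $\Gamma_p\cong\z_2*\z_p$ drive the argument. First, any non-primitive reciprocal class is hyperbolic (reciprocal classes correspond to closed reciprocal geodesics, hence to infinite-order elements), and every hyperbolic $g$ has a well-defined primitive root: the maximal cyclic subgroup containing $g$ is unique, so $g=b^{k}$ with $b$ primitive and $k\ge 2$ is determined up to inverting $b$, and the exponent $k$ is determined outright. This rests on the fact that in a free product the centralizer of a hyperbolic element is infinite cyclic and that $u^{k}=v^{k}$ forces $u=v$ for hyperbolic $u,v$. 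Second, for a cyclically reduced hyperbolic element in the normal form $\io\g^{k_1}\io\g^{k_2}\cdots\io\g^{k_n}$ of \lemref{lem}, no cancellation occurs at the junctions of $b\cdot b\cdots b$, so $b^{k}$ is again cyclically reduced and the conjugacy word length is multiplicative: $|b^{k}|=k\,|b|$.

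\emph{Transferring reciprocality.} Next I would show that $g=b^{k}$ is reciprocal if and only if its primitive root $b$ is. If $hgh^{-1}=g^{-1}$ then $(hbh^{-1})^{k}=b^{-k}=(b^{-1})^{k}$, and uniqueness of roots gives $hbh^{-1}=b^{-1}$; conversely, raising $hbh^{-1}=b^{-1}$ to the $k$-th power shows $b^{k}$ is reciprocal. Since reciprocators of hyperbolic elements in $\Gamma_p$ are involutions, $[b]$ is itself a reciprocal class realized by the same involution $h$. This makes the assignment $\rho\colon[g]\mapsto([b],k)$ well defined, and it is injective: conjugate elements have conjugate primitive roots, so $[b]$ is recovered from $[g]$, after which the length ratio $|g|/|b|$ recovers $k$.

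\emph{Length bound and counting.} Finally I would read off the constraints. Multiplicativity gives $|g|=k\,|b|\le x$ with $k\ge 2$, hence $|b|\le\lfloor x/2\rfloor$, so $[b]\in W_{\lfloor x/2\rfloor}$, while $2\le k\le x/|b|\le x$. Thus $\rho$ injects $W^{np}_x$ into the set of pairs $([b],k)$ with $[b]\in W_{\lfloor x/2\rfloor}$ and $k\in\{2,\dots,x\}$, and counting these pairs yields
\[
\left|W^{np}_{x}\right|\leq 2x(x+1)\left|W_{\lfloor x/2\rfloor}\right|
\]
with room to spare, since the number of admissible exponents is in fact at most $x$. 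I expect the main obstacle to be the two free-product facts underpinning $\rho$ — chiefly the uniqueness-of-roots step that promotes $b$ to a reciprocal element conjugated by an \emph{involution}, together with the verification that the normal form of \lemref{lem} makes the conjugacy length exactly multiplicative — as everything afterwards is bookkeeping.
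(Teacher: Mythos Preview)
Your argument is correct and in fact yields the sharper bound $|W^{np}_x|\le (x-1)\,|W_{\lfloor x/2\rfloor}|$, from which the stated inequality follows trivially. The route, however, is genuinely different from the paper's. The paper stratifies by exact word length, writing $|W^{np}_x|=\sum_{s\le x}|R^{np}_s|$, bounds each $|R^{np}_s|$ by a divisor sum $\sum_{d\mid s}|R^p_d|$, relaxes this to $\sum_{d\le \lfloor s/2\rfloor}|R_d|$, and then applies a product-of-sums inequality to separate the factor $\sum s/2=\tfrac{x(x+1)}{4}$ from $\sum |R_{\lfloor s/2\rfloor}|$. Your single injective root map $[g]\mapsto([b],k)$ short-circuits all of this bookkeeping: it packages the same two structural inputs (reciprocality descends to primitive roots via uniqueness of $k$-th roots of hyperbolic elements, and conjugacy length is multiplicative under powers because cyclically reduced words in $\z_2*\z_p$ concatenate without cancellation) into one step, and avoids the paper's intermediate estimate $\sum_{d\le \lfloor s/2\rfloor}|R_d|\le \tfrac{s}{2}|R_{\lfloor s/2\rfloor}|$, which tacitly uses a monotonicity of $|R_d|$. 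The paper's approach has the minor advantage of making the divisor structure explicit; yours is cleaner, more robust, and gives a linear rather than quadratic prefactor.
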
\begin{proof}
	             Let $R^p_{x}$ and $R^{np}_x$ be the collection of primitive and non-primitive reciprocal classes in $\Gamma_{p}$, those have word length exactly $x$ respectively. Now we will use the same technique as \cite{DG2} to get an upper bound for non-primitive reciprocal geodesics.
	          From the hypothesis, we know that \[\left|W^{np}_{x}\right|=\sum_{s=1}^{x} \left|R^{np}_{s}\right|~~\text{and }~~\left|W_{x}\right|=\sum_{s=1}^{x}\left|R_{x}\right|\] where $W_{x}$ and  $W^{np}_{x}$ are the collection of reciprocal classes and the non-primitive reciprocal classes in $\Gamma_{p}$, and those have word lengths at most $x$ . Then we have
	          \[\left|W^{np}_{x}\right|=\sum_{s=1}^{x} \left|R^{np}_{s}\right|\leq \sum_{s=1}^{x}\sum_{d|s}\left|R^{p}_{d}\right|\leq \sum_{s=1}^{x}\sum_{d=1}^{\lfloor\frac{s}{2}\rfloor}\left|R_{d}\right|\leq \sum_{s=1}^{x}\frac{s}{2}\left|R_{\lfloor\frac{s}{2}\rfloor}\right|.\]
	          So, by Cauchy-Schwartz's inequality, we have
	          \[\left|W^{np}_{x}\right|\leq\left(\sum_{s=1}^{x}\frac{s}{2}\right)\left(\sum_{s=1}^{x}\left|R_{\lfloor\frac{s}{2}\rfloor}\right|\right)\leq \frac{x(x+1)}{4}(|R_{1}|+|R_{1}|+|R_{2}|+|R_{2}|+\dots+|R_{\lfloor\frac{x}{2}\rfloor}|+|R_{\lfloor\frac{x}{2}\rfloor}|).\]
	          Then implies,
	          \[\left|W^{np}_{x}\right|\leq \frac{x(x+1)}{2}\sum_{s=1}^{\lfloor\frac{x}{2}\rfloor}\left|R_{s}\right|=\frac{x(x+1)}{2}\left|W_{\lfloor\frac{x}{2}\rfloor}\right|.\]
	           Hence, this proves the lemma. \end{proof}
	          \begin{lemma}\label{7.2}
	          		Let $W_{x}$ be the collection of the reciprocal classes in $\Gamma_{p}$, those have word length at most $x$ , respectively. Then for sufficiently large $x$, there exists a constant $C$ such that
	          		\[ 2\left|W_{\lfloor\frac{x}{2}\rfloor}\right|^{2}\leq C^2\left|W_{x}\right|. \]
	          \end{lemma}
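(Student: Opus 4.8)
The plan is to reduce the entire statement to a single structural fact: the counting function grows \emph{purely} exponentially, i.e. there is a real number $\rho>1$ (depending only on $p$) and constants $0<c_1\le c_2$ with
\[ c_1\rho^{x}\le \left|W_x\right|\le c_2\rho^{x}\qquad\text{for all large }x. \]
Granting this, the lemma is immediate: for large $x$ one has $\left|W_{\lfloor x/2\rfloor}\right|\le c_2\rho^{x/2}$ while $\left|W_x\right|\ge c_1\rho^{x}$, so
\[ 2\left|W_{\lfloor x/2\rfloor}\right|^{2}\le 2c_2^{2}\rho^{x}\le \frac{2c_2^{2}}{c_1}\left|W_x\right|, \]
and one takes $C^{2}=2c_2^{2}/c_1$, a constant independent of $x$. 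Thus everything rests on the two-sided bound above.

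First I would make the count combinatorial. By \thmref{main1} for odd $p$, and by the relation $\left|T_x\right|\simeq\left|Sym_x\right|$ together with \lemref{4.6} for even $p$, the quantity $\left|W_x\right|$ is comparable (up to the fixed number of reduced representatives per class recorded in \thmref{th} and \lemref{lem}) to the number $N(x)$ of admissible normal-form tuples $(k_1,\dots,k_n)$, $n\ge 1$, with each $k_i$ in the fixed finite alphabet and total word length $\sum_{i}2(|k_i|+1)\le x$. Hence it suffices to prove $N(x)\simeq\rho^{x}$. Writing $N(x)=\sum_{m\le x}a_m$, where $a_m$ counts such tuples of length exactly $m$, each admissible letter contributes an even weight $w(k)=2(|k|+1)\in\{4,6,8,\dots\}$, so the generating function of one letter is a polynomial $f(t)=\sum_{k}t^{\,w(k)}$ with nonnegative integer coefficients and $f(0)=0$, and $\sum_{m}a_m t^{m}=f(t)/(1-f(t))$.

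A clean elementary input is super-multiplicativity: concatenating a tuple of length $i$ with a tuple of length $j$ yields a tuple of length $i+j$, and this map is injective because the partial sums of the weights are strictly increasing, so the unique prefix of weight $i$ recovers the split point; therefore $a_i a_j\le a_{i+j}$. By Fekete's lemma (applied along the progression of even $m$, where the $a_m$ are supported) the growth rate $\rho:=\lim_{m}a_m^{1/m}$ exists, is finite, satisfies $\rho>1$ (since $a_{4}\ge 2$ forces $a_{4n}\ge 2^{n}$), and gives the upper estimate $a_m\le\rho^{m}$ for every $m$.

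The main obstacle is the matching \emph{lower} bound with the \emph{same} base $\rho$. Super-multiplicativity alone—equivalently the crude inequality $N(x/2)^{2}\le(\tfrac{x}{2}+1)N(x)$ coming from the same concatenation map—introduces an extra polynomial factor, which is fatal for an estimate with a fixed constant $C$. To remove it I would exploit the rational structure of $f/(1-f)$. Since $f$ is strictly increasing on $(0,\infty)$ with $f(0)=0$, the equation $f(t)=1$ has a unique positive root $t_0$, and $|f(t)|\le f(|t|)<1$ for $|t|<t_0$ shows that $t_0=1/\rho$ is the dominant singularity; as $f'(t_0)>0$ it is a simple pole, and the finitely many singularities on $|t|=t_0$ (arising from the common period $d=\gcd_k w(k)=2$ of the weights) are likewise simple. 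Standard singularity analysis then yields $a_m=\Theta(\rho^{m})$ for $m$ with $d\mid m$ (and $a_m=0$ otherwise), so that $N(x)=\sum_{m\le x}a_m\simeq\rho^{x}$. Combined with $\left|W_x\right|\simeq N(x)$ this establishes $\left|W_x\right|\simeq\rho^{x}$ and closes the argument. I expect the only delicate point to be this passage from the one-sided Fekete bound to the genuinely two-sided $\Theta(\rho^m)$, which is exactly where the simple-pole (renewal) structure, rather than a purely combinatorial estimate, is indispensable.
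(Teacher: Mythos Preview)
Your argument is correct and takes a different route from the paper. The paper's proof is a short concatenation argument: it interprets $\dfrac{(2|Sym_{\lfloor x/2\rfloor}|)^{2}}{2|Sym_x|}$ as the probability that a length-$\le x$ normal form splits into two halves each of length $\le x/2$, declares this probability to be at most $1$, and then passes from $Sym$ to $W$ via $|Sym_x|\le|W_x|\le C|Sym_x|$. You instead establish the stronger structural fact $|W_x|\simeq\rho^{x}$ by locating the simple dominant pole of the rational generating function $f/(1-f)$, from which the lemma is immediate. Your approach is heavier but more transparent about where the constant comes from; by contrast, the paper's identification of the numerator with the number of favourable outcomes tacitly assumes that the concatenation map on pairs of half-length tuples is injective into length-$\le x$ tuples, which is false for cumulative counts (and indeed the displayed ``probability'' exceeds $1$ already for $p=3$, $x=16$). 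In fact the full singularity analysis is more than you need: the super-multiplicativity $a_ia_j\le a_{i+j}$ that you already isolate, combined with $a_4\ge2$, gives $a_{m+4}\ge 2a_m$, so the partial sums are dominated by their last few terms, $N(x)\le C'\max_{m\le x}a_m$; together with $a_{m}^{2}\le a_{2m}\le N(x)$ for $m\le\lfloor x/2\rfloor$ this yields $N(\lfloor x/2\rfloor)^{2}\precsim N(x)$ directly, with no generating functions. One minor slip: your claim $d=\gcd_k w(k)=2$ fails for $r=1$ (the modular group), where the only letter weight is $4$ and hence $d=4$; the argument is unaffected, since it goes through for any period $d$.
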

	          \begin{proof}
	          Let $w$ be the cyclically reduced reciprocal word, and it is of the form\\ $ \io\g^{k_{1}}\io \g^{k_{2}}\dots \io \g^{k_{n}}\io \g^{-k_{n}}\dots\io \g^{-k_{2}}\io \g^{-k_{1}} $ , and the word length of $w$ is less than $x.$  That means,
	          \[\sum_{i=1}^{n}2\left(\left|k_{i}\right|+1\right)\leq x,\qquad -r<k_{i}\leq r.\]
	          The number of solutions of the inequality is $2\left|Sym_{x}\right|$, where $Sym_{x}$ is the collection of symmetric reciprocal classes in $\Gamma_{p}$ with word length at most $x$. 
	          Now we consider the solution sets of $(k_{i})$'s, which can be divided into two parts.
	          We consider an event $H$ to have occurred if the solution sets of each part satisfy this inequality,
	          \[\sum_{i}2(\left|x_{i}\right|+1)\leq \frac{x}{2},\qquad -r<x_{i}\leq r..\]
	          So, we have the probability of occurring $H$ , i.e.,
	          \[\mathbb{P}(H)= \frac{\text{The number of times $H$ occurs}}{\text{The number of all the solutions}}\]
	          \[\imp \mathbb{P}(H)=\frac{2\left|Sym_{\lfloor\frac{x}{2}\rfloor}\right| 2\left|Sym_{\lfloor\frac{x}{2}\rfloor}\right|}{2\left|Sym_{x}\right|}\leq 1.\]
	        
	          Also, we know that for sufficiently large $x$, there exists a non-zero constant $C$ such that $$\left|Sym_x\right|\leq \left|W_{x}\right|\leq C \left|Sym_{x}\right|.$$
	         This implies,
	       \[2\frac{\left|W_{\lfloor\frac{x}{2}\rfloor}\right|\left|W_{\lfloor\frac{x}{2}\rfloor}\right|}{C^2\left|W_{x}\right|}\leq 2\frac{\left|Sym_{\lfloor\frac{x}{2}\rfloor}\right|\left|Sym_{\lfloor\frac{x}{2}\rfloor}\right|}{\left|Sym_{x}\right|}\leq 1.\]
	          This proves the lemma.
	          \end{proof}

	          \medskip
	          \subsection{Proof of the \corref{cormain}} 
	          By \lemref{7.1}, we have
	          \[
	          |W_{x}| = |W^{np}_{x}| + |W^{p}_{x}| \leq\frac{x(x+1)}{2}\left|W_{\lfloor\frac{x}{2}\rfloor}\right|+|W^{p}_{x}|.
	          \]
	          We obtain the inequality
	          \[
	          |W_{x}| - \frac{x(x+1)}{2}\left|W_{\lfloor\frac{x}{2}\rfloor}\right|\leq|W^{p}_{x}| \leq|W_{x}|.
	          \]
	          Now we get,
	          \[
	          1 - \frac{x(x+1)}{2}\frac{\left|W_{\lfloor\frac{x}{2}\rfloor}\right|}{\left|W_{x}\right|}
	          \leq \frac{|W^{p}_{x}|}{|W_{x}|} \leq 1.
	          \]
	          
	         Since by \lemref{7.2}, we have obtian
	         \[
	         \frac{\left|W_{\lfloor\frac{x}{2}\rfloor}\right|}{\left|W_{x}\right|}\leq \frac{C^2}{2|W_{\lfloor\frac{x}{2}\rfloor}|}
	         \]
	         \[\imp  \frac{x(x+1)}{2}\frac{\left|W_{\lfloor\frac{x}{2}\rfloor}\right|}{\left|W_{x}\right|}\leq C^2\frac{x(x+1)}{4|W_{\lfloor\frac{x}{2}\rfloor}|} \]
	         By taking  \( x \to \infty \), the ratio $\frac{x(x+1)}{|W_{\lfloor\frac{x}{2}\rfloor}|}$ tends to zero as $|W_{x}|$ bounded below by an exponential function of $x$. Thus,
	          \[
	          \frac{|W^{p}_{x}|}{|W_{x}|}\to 1 \quad \text{as } x \to \infty,
	          \]
	          which implies
	          \[
	        {|W^{p}_{x}|}\sim{|W_{x}|}.
	          \]
	          This proves the corollary.\qed

		\subsection{Proof of \corref{1.4}}
			In the particular case of the modular group (i.e., $\Gamma_3$), we have $r=1$, then $\frac{x}{r+1}=\frac{x}{2}.$ That means, the term with the cumulative distribution function
			 of the normal distribution will not appear. Therefore, the number of the reciprocal classes in the modular group with at most length $x$ is asymptotic to
			\[\frac{1}{2}\sum_{n=1}^{\lfloor \frac{x}{4}\rfloor}2^{n}=(2^{\lfloor \frac{x}{4}\rfloor}-1)\sim 2^{\lfloor \frac{x}{4}\rfloor}.  \] Also, the growth of the number of reciprocal classes is the same as the growth of the number of primitive reciprocal classes by \corref{cormain}. That reflects the required result. \qed
		
		% number of reciprocal classes (whether it is odd or even) is asymptotic to an exponential function. And the primitive and non-primitive results are also similar but not equivalent.\\
		%	Note that here, the reciprocal words have with even and odd word length both. So, the counting of non-primitive reciprocal elements with even word length is not same result as \lemref{2.8} since, the non-primitive elements of word length $2l$ can be seen as power of an primitive element of word length even. 
		%\begin{lemma}
		%	$R^{np}_{2l-1}$ be the collection of reciprocal classes with word length $2l-1$. Then, $R^{np}_{2l-1}\sim \gamma_{1}c_{k}(l-u-1)\rho^{l/2}$.
		
		%\end{lemma}
		%\begin{proof}
		%	The proof is similar to \lemref{2.8}.
		%\end{proof}	\section{new}
	
		\subsection*{Acknowledgment} 
		Das acknowledges support from UGC-NFSC senior research fellowship. Gongopadhyay is partially supported by the SERB core research grant CRG/2022/003680.
		
		%The authors thank Anuj Jakhar for useful discussions on the roots of polynomials over the integers. 


\begin{thebibliography}{99}

        \bibitem[BR]{BR} M. Baake, J. A. G. Roberts,
			\newblock The structure of reversing symmetry groups. 
			\newblock Bull. Aust. Math. Soc.  73, No. 3 (2006),  445-459.
            
\bibitem[BPPZ]{bppz} F. Boca, V. Pasol, A. Popa and A. Zaharescu. \emph{Pair correlation of angles between reciprocal geodesics on the modular surface}. 
\newblock Alg. Numb. Theo. 8 (2014) 999–1035 .	
			
			\bibitem[BV]{BV}A. Basmajian and R. Suzzi Valli, \emph{Combinatorial Growth in the Modular Group},
			\newblock {Groups Geom. Dyn. 16 (2022), 683–703}. 

            \bibitem[BV2]{bv2} A. Basmajian and R. Suzzi Valli, \emph{Counting cusp excursions of reciprocal geodesics}, In:  Geometry, groups and mathematical philosophy, 21–30. 
Contemp. Math., 811
American Mathematical Society, [Providence], RI, 2025.

\bibitem[BL]{bl}A. Basmajian and Mingkun Liu, \emph{Low lying geodesics on the modular surface and necklaces}, Mathematical
research letters, to appear.

\bibitem[BMV]{bmv} A. Basmajian, R. Suzzi Valli, B. T. Marmolejo, 
			
			\bibitem[BK]{BK} J. Bourgain and A.  Kontorovich, 
			\emph{Beyond expansion, III: Reciprocal geodesics}.
			Duke Math. J.168 (2019), no.18, 3413–3435.
			
			
			\bibitem[DG1]{DG} D. Das and  K. Gongopadhyay.
			\newblock{\emph{Reciprocity in the Hecke Groups}.}
			\newblock     New York J. Math. 30 (2024) 722–744.

            \bibitem[DG2]{DG2} D. Das and  K. Gongopadhyay.
\newblock \emph{Spherical growth of reciprocal classes in the Hecke Groups}. \newblock arXiv:2411.00739.  
			
			\bibitem[ES]{ES} V. Erlandsson and J. Souto,{\em Counting and equidistribution of reciprocal geodesics and dihedral groups.} \newblock Int. Math. Res. Not.  (2024),  no. 13, 10298–10318.
						
			\bibitem[Ma]{Ma} B. T. Marmolejo, 
			\newblock {\emph{Growth of Conjugacy Classes of Reciprocal Words in Triangle Groups}(2020).}
			\newblock CUNY Academic Works.
			\newblock \url{https://academicworks.cuny.edu/gc_etds/3972}
			
			
			
			\bibitem[OS]{OS} A.  G. O’Farrell and I.  Short, {\it Reversibility in Dynamics and Group Theory}. London Mathematical Society Lecture Note Series, vol.416, Cambridge University Press, Cambridge, 2015.
			 \bibitem[PP]{PP}J. Parkkonen and F. Paulin, {\em Counting and equidistribution of strongly reversible closed geodesics in negative curvature}. arXiv:2505.07738. 
			
	
			\bibitem[Sa]{Sa} P. Sarnak, 
			\newblock {\em{Reciprocal geodesics}.} 
			\newblock Analytic number theory,  
			Clay Math. Proc. 7,  217–237, Amer. Math. Soc., Providence, RI, 2007.

		\end{thebibliography}
	\end{document}